\newcommand{\old}[1]{}
\theoremstyle{plain}
\newtheorem{thm}{Theorem}[section]
\newtheorem{lem}[thm]{Lemma}
\newtheorem{cor}[thm]{Corollary}
\theoremstyle{definition}
\newtheorem{defn}[thm]{Definition}
\newtheorem{ex}[thm]{Example}
\newtheorem{rk}[thm]{Remark}
\newtheorem{qn}[thm]{Question}
\def\reals{{\mathbb R}}
\title[$SB$-labelings and  poset topology]{$SB$-labelings and  posets with each interval homotopy equivalent to a sphere or a ball}
\author{Patricia Hersh} 
\address{Department of Mathematics, North Carolina State University, Raleigh, NC 27695}
\email{plhersh@ncsu.edu}
\author{Karola M\'esz\'aros}\address{Department of Mathematics, Cornell University, Ithaca, NY 14853}
\email{karola@math.cornell.edu}
\thanks{The authors thank the Banff International Research Station for providing a stimulating
environment in which they began to collaborate,  and specifically acknowledge support from NSF 
conference grant DMS-1101740.  The first  author was partially 
supported by NSF grants DMS-1002636, DMS-1200730 and DMS-1500987, and the second author was partially supported by an NSF Postdoctoral Research Fellowship, DMS-1103933, and NSF grant DMS-1501059.}
\subjclass[2010]{05E45, 06A07}
\begin{document}

\begin{abstract}
We introduce a new class of edge labelings  for locally finite lattices which we call  $SB$-labelings.  We 
 prove for finite lattices which admit  an $SB$-labeling that each open interval has the homotopy type of  a ball or  of a sphere of some dimension.
  Natural examples include  the weak order,  
  the Tamari lattice, and the finite distributive lattices.\\
 
\noindent \emph{Keywords:}  
poset topology, M\"obius function, 
crosscut complex, Tamari lattice, weak order
\end{abstract}

\maketitle

\section{Introduction}

Anders Bj\"orner and Curtis Greene have raised the following question (personal communication of 
Bj\"orner;  see also \cite{Gr} by Greene).

\begin{qn}
Why are there so many posets with the property that every interval has M\"obius function equaling $0, 1$ or $-1$?  Is there a unifying explanation?
\end{qn}

This paper introduces  a new type of edge labeling that  a finite lattice may have  which we dub an 
$SB$-labeling.  We prove for finite lattices admitting such a labeling that each  open interval has order complex that  is contractible or is  homotopy equivalent to a sphere of some dimension.  This immediately yields that the M\"obius function only takes the values $0,\pm 1$ on all intervals of the lattice.  The construction and verification of validity of such labelings seems quite readily achievable on a variety of examples of interest.   The name  $SB$-labeling was chosen with $S$ and $B$  reflecting the possibility of spheres and balls, respectively.  
This method will easily  yield  that each interval in the weak Bruhat  order of a finite Coxeter group, in  the Tamari lattice, and in any finite distributive lattice  is  homotopy equivalent to a ball or a sphere of some dimension.    In particular, this method may be applied to non-shellable examples, as the weak Bruhat order for finite Coxeter groups will demonstrate.  Example ~\ref{dominance-example} will show  that not all finite lattices with M\"obius function taking only values $0,\pm 1$ admit an $SB$-labeling.   However, $SB$-labeling seems to be a convenient method for  studying homotopy type by capturing algebraic structure amongst cover relations and may be useful  for further applications to lattices which are  endowed with such structure.  

A motivation for the notion of an $SB$-labeling came from crystal graphs, an important  tool for studying  the representation theory of Kac--Moody algebras.  
The crystal graphs coming from highest weight  representations of Lie algebras in finite type (and more generally in symmetrizable Kac--Moody type) 
 are in fact Hasse diagrams of partially ordered sets.  They 
are naturally endowed with an edge labeling that by definition meets nearly all of  the requirements to be an $SB$-labeling (using  the upcoming formulation of an $SB$-labeling known as the index 2 formulation). 
The notion  of an $SB$-labeling has been used  in \cite{HL} to prove that any pair of elements $u,v$  satisfying $u<v$ with  $\mu (u,v)\not \in \{ 0,1,-1\} $ 
 in a finite crystal poset given by 
a highest weight 
representation in the simply laced case  also has the property  that the poset  interval $[u,v]$ contains within it a relation amongst the so-called crystal operators that is not implied by Stembridge's  local relations from \cite{Stembridge}.
This allowed Hersh and Lenart in \cite{HL} to discover arbitrarily high degree relations amongst crystal operators that are not implied by lower degree relations through  a computer search for intervals with M\"obius function taking values other than $0, 1, $ and $-1$.

It would be interesting to know of additional examples of finite (or locally finite)  lattices with $SB$-labelings.  We have not made a comprehensive search for such examples, but rather have chosen to focus in this paper on some well-known  families of lattices with the appropriate M\"obius function  that seemed  to us to be especially interesting classes of posets.  See \cite{Muhle} by Henri M\"uhle for further examples of posets with $SB$-labelings e.g.~ for posets derived from sortable elements in Coxeter groups.

Section ~\ref{disc-morse-bg-section} quickly reviews background that will be needed later in the paper.   Section ~\ref{labeling-section}
gives two different formulations for  the definition of an $SB$-labeling, and it is shown here  that the first of these two versions  of the definition for an  $SB$-labeling implies each open interval $(u,w)$ in a finite lattice $L$   is homotopy equivalent to a ball or a sphere, with the homotopy type being that of a sphere if and only if $w$ is a join of atoms of the interval.   Section ~\ref{equivalence-section}  proves that these two formulations of the definition for an $SB$-labeling  are equivalent to each other.  The value in this comes from the fact that the second formulation is a local condition that appears to be more easily verifiable for families of lattices of interest.    Section ~\ref{application-section} gives  applications: it provides   $SB$-labelings for  the finite distributive lattices, the weak order of any finite Coxeter group, and the Tamari lattice.

\section{Background} \label{disc-morse-bg-section}

   A  partially ordered set  (poset) $P$ is a {\bf lattice} if each pair of elements $x,y\in P$ has a unique least upper bound, which we denote $x\vee y$, and a unique greatest lower bound, which we denote $x\wedge y$.  We denote by $\hat{0}$ (resp.~  $\hat{1}$)  the unique minimal (resp.~ maximal)  element of a finite lattice.   A {\bf cover relation} $u\prec v  $ in a poset  $P$ is a pair of elements  $u <  v$ with the further requirement that $u\le z \le v$ implies either $u=z$ or $z=v$.   A chain $u_1 < u_2 < \cdots < u_k $ is said to be a {\bf saturated chain} from $u_1$ to $u_k$ if  each $u_i < u_{i+1}$ is a cover relation; a saturated chain is a {\bf maximal chain} of the poset  if additionally there do not exist any elements  $x$ satisfying either of the conditions  $x<u_1$ or $u_k < x$, i.e., if the chain is not properly contained in any other chains. 
   A maximal chain $u_1 < \cdots < u_k$ need not be the longest chain from $u_1$ to $u_k$ in terms of number of elements in the chain, and for non-graded lattices such as the Tamari lattice not all maximal chains are of maximal length.  
  An {\bf open interval } in $P$, denoted $(u,v)$,  is the subposet of elements $z\in P$ satisfying $u < z < v$.  Likewise, a {\bf closed interval} $[u,v]$ is the subposet comprised of those $z\in P$ such that $u\le z \le v$.  We will  sometimes refer to the open interval $(\hat{0},\hat{1})$ in a finite lattice $L$ as the {\bf proper part} of $L$.   For convenience, we pass freely back in forth  between speaking of the  saturated chains from $u$ to $v$, and the maximal chains of the closed interval  $[u,v]$.

The {\bf  M\"obius function}, denoted $\mu_P $,   of a finite partially ordered set  $P$ is defined recursively as follows.  
For each $u\in P$ we have $\mu_P (u,u) = 1$.  For each $u <  v$, $\mu_P(u,v) = -\sum_{u\le x < v} \mu_P(u,x)$.   The M\"obius function provides the coefficients in inclusion-exclusion counting formulas. 
The {\bf order complex} of a finite poset $P$  
is the simplicial complex, denoted 
$\Delta (P)$, whose $i$-faces
are chains 
$ v_0 < \cdots < v_i  $
of $i+1$ comparable poset 
elements.  It is well known for each $u<v$ in $P$ 
that $\mu_P (u,v)  = \tilde{\chi }(\Delta (u,v))  $ where $\Delta (u,v)$ denotes the order 
complex of the open interval $(u,v)$ and $\tilde{ \chi } $ denotes its reduced 
Euler characteristic (which is obtained from the usual Euler characteristic by subtracting one from it).
Sometimes we will speak of the homotopy type of a  poset or poset interval, by which we  mean the homotopy type of the order complex of that poset or poset interval.

Our focus throughout this paper  will be on posets
in which the order complex of each  open interval $(u,v)$ will turn out to  
be  homotopy equivalent to a ball or a 
sphere, implying that  $\tilde{\chi } (\Delta (u,v))$ and hence $\mu_P(u,v)$ equals $0, 1, $ or $-1$ 
for each pair $u<v$.  
A key tool underlying  our work will be 
the Crosscut Theorem, which we review next.

Recall  from \cite{bjorner-top-methods} (see also \cite{Bj-81}, \cite{Folkman}, \cite{Rota})
 that a subset $C$ of a poset $P$ is called a {\bf crosscut} if the following conditions hold.
\begin{enumerate}
\item
$C$ is an antichain.  
\item
For every finite chain $\sigma $ in $P$ there exists an element of $C$ that is comparable to every element of $\sigma $.
\item
For each $A\subseteq C$ which is bounded, i.e.,  which has an upper bound or a lower bound, then the  join or the meet of the elements of $A$ exists as an element of $P$.
\end{enumerate}

Define the {\bf crosscut complex} given by a crosscut $C$ to be the simplicial complex whose faces are those subsets of $C$ which are bounded.

\begin{rk}
In a finite lattice $L$  (and hence  also in the proper part of $L$), the set of atoms of $L$ is a crosscut.  In this paper, we will make use of the next theorem with the atoms as the chosen crosscut.
\end{rk}

\begin{thm}[Crosscut Theorem, Theorem 10.8 in \cite{bjorner-top-methods}]\label{crosscut-theorem}
The crosscut complex given by any crosscut of a finite poset $P$ is homotopy equivalent to the order complex of $P$.
\end{thm}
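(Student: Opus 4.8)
The plan is to deduce the statement from the Nerve Lemma (in its combinatorial form for finite simplicial complexes) by constructing an explicit cover of the order complex $\Delta(P)$ by contractible subcomplexes indexed by the elements of the crosscut $C$, and then identifying the nerve of this cover with the crosscut complex. Write $\Gamma(C)$ for the crosscut complex given by $C$. For each $c\in C$ put $P_c=\{x\in P:x\le c \text{ or } x\ge c\}$, the set of elements comparable to $c$, and let $X_c=\Delta(P_c)$, regarded as a subcomplex of $\Delta(P)$.

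The first step is to check the hypotheses of the Nerve Lemma. The subcomplexes $X_c$ cover $\Delta(P)$: a simplex of $\Delta(P)$ is a finite chain $\sigma$, and crosscut axiom (2) provides some $c\in C$ comparable to every element of $\sigma$, so $\sigma\subseteq P_c$ and hence $\sigma\in X_c$. Each $X_c$ is contractible: if $\sigma\subseteq P_c$ is a chain, then $\sigma\cup\{c\}$ is again a chain contained in $P_c$, so every maximal chain of $P_c$ contains $c$, whence $\Delta(P_c)$ is a cone with apex $c$. The remaining step is the intersection analysis: for $A\subseteq C$ one has $\bigcap_{c\in A}X_c=\Delta(P_A)$, where $P_A=\{x\in P:x\text{ is comparable to every }c\in A\}$, and I must show $\Delta(P_A)$ is contractible when nonempty and that $P_A\neq\emptyset$ exactly when $A$ is a face of $\Gamma(C)$.

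This analysis is the core of the argument, and it is where I would use the antichain axiom (1) together with axiom (3). When $|A|\ge 2$, no element $x$ can satisfy $c_1\le x\le c_2$ for distinct $c_1,c_2\in A$ (this would force $c_1\le c_2$), so every $x\in P_A$ is either a lower bound or an upper bound of $A$; hence $P_A\neq\emptyset$ forces $A$ to be bounded, i.e.\ a face of $\Gamma(C)$. Conversely, if $A$ is bounded above then $\bigvee A$ exists by axiom (3) and lies in $P_A$; if $A$ is bounded above but not below then in fact $P_A=\{x\in P:x\ge\bigvee A\}$ has a minimum and $\Delta(P_A)$ is a cone, and symmetrically if $A$ is bounded below only. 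If $A$ is bounded both above and below, the antichain axiom places every element of $P_A$ in one of two blocks, $\{x\le\bigwedge A\}$ or $\{x\ge\bigvee A\}$, and the first block lies entirely below the second, so $P_A$ is an ordinal sum of a poset with a maximum and an arbitrary poset; then $\Delta(P_A)$ is a join with a cone and hence contractible. The singleton and empty cases are immediate. It follows that $\bigcap_{c\in A}X_c$ is contractible whenever nonempty and nonempty exactly for the faces $A$ of $\Gamma(C)$, so the nerve of $\{X_c\}_{c\in C}$ equals $\Gamma(C)$, and the Nerve Lemma gives $\Delta(P)\simeq\Gamma(C)$.

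I expect the main obstacle to be precisely this intersection analysis, and within it the case of a subset $A\subseteq C$ bounded simultaneously above and below: there is no minimum or maximum of $P_A$ to exploit directly, and the key point is that the antichain axiom splits $P_A$ into a lower block sitting entirely below an upper block, turning $\Delta(P_A)$ into a join with a cone. In the application driving this paper, namely the atoms of a finite lattice taken inside the proper part, this case does not arise, since two or more atoms have no common lower bound in the proper part; this is why the theorem specializes so transparently there.
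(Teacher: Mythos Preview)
The paper does not give its own proof of this theorem; it is quoted as Theorem~10.8 of Bj\"orner's Handbook chapter and used as a black box in the background section. So there is no proof in the paper to compare your attempt against.

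For what it is worth, your argument is correct and is essentially the standard Nerve Lemma proof that appears in the cited reference. One small simplification: in the case where $A$ is bounded both above and below, you do not need the ordinal-sum description. The element $\bigwedge A$ lies in $P_A$ and is comparable to every element of $P_A$ (each lower bound of $A$ is $\le\bigwedge A$, and each upper bound is $\ge c\ge\bigwedge A$ for any $c\in A$), so $\Delta(P_A)$ is already a cone with apex $\bigwedge A$.
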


\begin{ex}
Letting $B_n$ denote the poset of subsets of $\{ 1,2,\dots ,n\} $ ordered by containment, notice that the crosscut complex for $B_3$ given by the crosscut comprised of the atoms of $B_3$ is a $2$-simplex.  On the other hand, consider the subposet $B_3\setminus \{ \hat{0}, \hat{1}\} $ in which we delete the unique minimal and maximal elements, namely we delete
 $\hat{0} = \emptyset $ and  $\hat{1} = \{ 1,2,3\} $; this smaller  poset 
has the same crosscut, but  its  crosscut complex is the boundary of a $2$-simplex.
\end{ex}

\begin{rk}\label{distinct-atom-crosscut}
If one can prove that distinct sets of atoms have distinct joins, then the Crosscut 
Theorem will imply that the subposet of joins of atoms has order complex that is 
homotopy equivalent to the order complex for the 
entire poset.  We will use this in the special case where our poset is the proper part of a finite lattice.
 An $SB$-labeling, a new type of edge labeling which we introduce momentarily, will guarantee that distinct sets of atoms have distinct joins.  
\end{rk}

\section{A new class of edge  labelings: $SB$-labelings}\label{labeling-section}

 Next we introduce a new class  of edge labelings  which we call 
 $SB$-labelings.  We will call a lattice admitting such a labeling an $SB$-lattice.   We will give two different formulations of the definition of an $SB$-labeling, and then  
 we will prove that these are equivalent to each other.   One formulation will be convenient for proving topological consequences of having an $SB$-labeling.  In particular, we use this formulation  to   prove that each open interval in a  finite lattice with an $SB$-labeling is homotopy equivalent to a ball or a sphere.  The other formulation seems likely to be more convenient for constructing $SB$-labelings on examples.  
 
 Later in the paper we will indeed demonstrate  that several well-known lattices admit $SB$-labelings, in spite of the fact that some of these lattices cannot possibly be shellable.
 Specifically, we  will apply this method  of $SB$-labeling 
 to the weak Bruhat order of a finite Coxeter group, 
 to the Tamari lattice,   and to the  finite 
 distributive lattices,   while  Example ~\ref{dominance-example} will show that dominance order on integer partitions does not in general admit an $SB$-labeling. 
 The $SB$-labelings will yield the homotopy type of each poset interval 
 by  a short, uniform approach for these classes of posets which had previously been analyzed by other methods.

  \begin{rk}
  It is natural to ask if  this notion for edge labelings may be extended to a more general notion for chain labelings (in the sense of  \cite{BW-on-lex}).  However, key properties of these $SB$-labelings in fact will  rely in an essential way on our usage of edge labelings rather than chain labelings.  Therefore, we confine ourselves to  considering edge labelings.
  \end{rk}

\begin{defn}\label{lattice-labeling}
An edge labeling $\lambda $ of a finite lattice $L$ is a {\bf lower $SB$-labeling} if it may be constructed as follows.  Begin with a label set $S$ such that there is a subset  $\{ \lambda_a | a\in A(L) \}  $ of $S$  whose members are in bijection with the set $A(L)$ of atoms of  $L$.
\begin{enumerate}
\item
No two labels upward from $\hat{0}$ to  distinct atoms may  be equal. 
This allows us to define  the label $\lambda_a $  on each   cover relation $\hat{0}\prec a$ as  the label corresponding to the  atom 
$a$.  
\item
Given any interval of the form 
$[\hat{0}, a_{i_1}\vee \cdots \vee a_{i_r} ] $ for  $ \{  a_{i_1},\dots
,a_{i_r} \}  \subseteq A(L)$, each of the saturated chains $M$ from $\hat{0} $ to $a_{i_1}\vee \cdots \vee
a_{i_r} $  has the property
that the set $\lambda (M)$ of labels occurring with positive multiplicity on $M$  is exactly 
$\{ \lambda_{a_{i_j}} | 1\le j \le r \} $.
\end{enumerate}
When an edge labeling $\lambda $ for a finite lattice $L$  meets  these conditions upon restriction to each  closed interval of $L$, then we call such a labeling an {\bf $SB$-labeling}.  We call a lattice  with an $SB$-labeling an {\bf $SB$-lattice}. 
\end{defn}

\begin{rk}
Notice that condition (2) above implies for $S, T$ distinct sets of atoms, that the join of the set of atoms in $S$ does not equal the join of the set of atoms in $T$.  In particular, this implies that the subposet of joins of atoms is a Boolean algebra.
\end{rk}

Now we give what we call  the  ``index 2 formulation of an  $SB$-labeling'', a type of labeling that we will prove in Theorem ~\ref{equivalent-definitions}  is equivalent to the notion of $SB$-labeling.   In light of Theorem ~\ref{equivalent-definitions}, one may henceforth take either definition as a definition of $SB$-labeling.

\begin{defn}\label{index-2-definition}
 The {\bf index 2 formulation of an  $SB$-labeling}  is an edge labeling on a  finite lattice $L$ satisfying the following conditions for each  $u,v,w \in L$ such that 
 $v$ and $w$ are distinct elements which each  cover $u$:
\begin{enumerate}[(i)]
\item
$\lambda (u,v)\ne \lambda (u,w)$
\item
Each saturated chain from $u$ to $v\vee w$  uses both of these labels $\lambda (u,v)$ and $\lambda (u,w)$ a positive number of times.
\item
None of the saturated chains from $u$ to $v\vee w$ use any other types of labels besides $\lambda (u,v)$ and $\lambda (u,w)$.
\end{enumerate}
\end{defn}

\begin{thm}\label{equivalent-definitions}
An edge labeling on a finite lattice is an $SB$-labeling if and only if it satisfies the index 2 formulation for an $SB$-labeling.
\end{thm}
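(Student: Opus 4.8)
The plan is to prove the two implications separately; the forward one is a triviality and essentially all the content is in the converse. For the forward implication, suppose $\lambda$ is an $SB$-labeling and fix $u,v,w$ with $v$ and $w$ both covering $u$. Inside the closed interval $[u,v\vee w]$ the elements $v,w$ are atoms whose join is $v\vee w$, so condition~(1) of Definition~\ref{lattice-labeling} gives $\lambda(u,v)\neq\lambda(u,w)$, while condition~(2) applied to the atom set $\{v,w\}$ says every saturated chain on $[u,v\vee w]$ has label set exactly $\{\lambda(u,v),\lambda(u,w)\}$ --- precisely conditions~(2) and~(3) of the index~2 formulation. For the converse, suppose $\lambda$ satisfies the index~2 formulation on $L$. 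Every closed interval $[x,y]$ of $L$ is again a finite lattice whose cover relations, and whose joins of pairs of covers of a fixed element, are inherited from $L$, so the index~2 formulation passes to $[x,y]$; it therefore suffices to verify conditions~(1) and~(2) of Definition~\ref{lattice-labeling} for $L$ itself and then apply that on every closed interval. Condition~(1) is condition~(1) of the index~2 formulation at $u=\hat{0}$, so the task is condition~(2).

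I would prove condition~(2) by induction on $r$, establishing for every lattice $L$ with an index~2 labeling and every $r$-element set $S=\{a_1,\dots,a_r\}$ of atoms, with $z:=a_1\vee\cdots\vee a_r$, the conjunction $(\dagger_r)$ of: part~(a), the only atoms of $L$ lying below $z$ are $a_1,\dots,a_r$ (equivalently, distinct sets of atoms have distinct joins), and part~(b), every saturated chain on $[\hat{0},z]$ has label set $\{\lambda(\hat{0},a_1),\dots,\lambda(\hat{0},a_r)\}$. Part~(b) is exactly condition~(2) of Definition~\ref{lattice-labeling}. The cases $r\le 1$ are trivial. For $r=2$, part~(b) is literally conditions~(2),(3) of the index~2 formulation at $u=\hat{0},v=a_1,w=a_2$, and part~(a) follows from it together with condition~(1): placing an atom $b\le a_1\vee a_2$ at the bottom of a saturated chain of $[\hat{0},a_1\vee a_2]$, its first label $\lambda(\hat{0},b)$ lies in $\{\lambda(\hat{0},a_1),\lambda(\hat{0},a_2)\}$ by part~(b), forcing $b\in\{a_1,a_2\}$.

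In the inductive step $r\ge 3$ I would prove part~(a) first and then deduce part~(b). For part~(a), let $b$ be an atom with $b\le z$: if for some ordering of the $a_i$ one already has $b\le a_1\vee\cdots\vee a_j$ with $j<r$, then $(\dagger_j)$(a) gives $b\in S$, and $(\dagger_{r-1})$(a) applied to the $(r-1)$-element subsets of $S$ shows no $a_i$ lies below $\bigvee(S\setminus\{a_i\})$; the remaining case is an atom $b$ below the full join $z$ but below no proper sub-join, which I would contradict by producing an element $u<z$ with two distinct covers $v,w$ for which $[u,v\vee w]$ carries a cover relation whose label is forced --- via $(\dagger_2)$ and $(\dagger_{r-1})$ applied inside suitable strictly smaller intervals --- to differ from both $\lambda(u,v)$ and $\lambda(u,w)$, contradicting condition~(3) of the index~2 formulation. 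Granting part~(a), part~(b) is comparatively routine: in a saturated chain $\hat{0}\prec x_1\prec\cdots\prec z$, part~(a) forces the atom $x_1$ into $S$, so its label is one of the $\lambda(\hat{0},a_i)$, and one then shows the whole chain has label set $\{\lambda(\hat{0},a_1),\dots,\lambda(\hat{0},a_r)\}$ by descending into the smaller interval $[x_1,z]$ and using, besides the inductive hypothesis, the elementary observation that a single ``2-move'' on a saturated chain --- replacing an interior vertex $y_i$ by another $y_i'$ with $y_{i-1}\prec y_i'\prec y_{i+1}$ --- preserves the multiset of labels, which is immediate from conditions~(2),(3) of the index~2 formulation on $[y_{i-1},y_i\vee y_i']$.

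The step I expect to be the main obstacle is part~(a) of the inductive step, namely the statement of the Remark after Definition~\ref{lattice-labeling} that distinct sets of atoms have distinct joins. The subtlety is that an interval $[\hat{0},\bigvee S]$ need be neither graded nor atomistic even when $z$ is a join of atoms, so the argument cannot be purely rank-theoretic; one is forced to use condition~(3) of the index~2 formulation at cover pairs based at elements strictly above $\hat{0}$, and to run a secondary induction (on $|[\hat{0},z]|$) so that the auxiliary appeals to $(\dagger_{r-1})$ and $(\dagger_2)$ take place in strictly smaller intervals. The analogous bookkeeping needed in part~(b) is of the same flavor but easier, and everything else is straightforward.
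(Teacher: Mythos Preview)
Your forward direction is fine and matches the paper. The gap is in the converse, specifically in the machinery you propose for parts~(a) and~(b) of the inductive step. Your ``2-move'' (swap $y_i$ for $y_i'$ with $y_{i-1}\prec y_i'\prec y_{i+1}$) does preserve the label set on $[y_{i-1},y_{i+1}]$, but it also preserves the \emph{length} of the chain; since the intervals $[\hat 0, a_{i_1}\vee\cdots\vee a_{i_r}]$ are in general not graded (already in weak order, $[\hat 0,s_is_j]$ has chains of length $m(i,j)$, which varies), 2-moves cannot connect all saturated chains, and so cannot propagate label information from one chain to another. The paper's key device is the more general \emph{basic move}: replace the portion of a chain on an interval $[u,x\vee y]$ (of arbitrary length) by another chain on that interval, where $u\prec x$ and $u\prec y$; condition~(3) of the index~2 formulation then guarantees label sets are preserved, and a separate lemma (proved by induction on total length, using only the lattice property) shows any two saturated chains on $[a,b]$ are connected by such moves. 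That pair of facts is what does the real work.

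Relatedly, your plan to prove part~(a) \emph{before} part~(b) is the reverse of what actually goes through. Your sketch for~(a) (``produce $u<z$ with covers $v,w$ so that $[u,v\vee w]$ carries a forbidden label'') is not a proof: you have not said how to locate such $u,v,w$, and in the non-graded, non-atomistic setting there is no obvious rank argument to fall back on. In the paper, part~(b) is proved first---by building a subposet of $[\hat 0,z]$ step by step (adding intervals $[u,x\vee y]$ one at a time) until it has a unique maximal element $m\ge z$, hence exhibiting one saturated chain whose labels lie in $\{\lambda_{a_{i_1}},\dots,\lambda_{a_{i_r}}\}$, and then invoking the basic-move connectivity to get the same label set on every chain. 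Part~(a) then drops out as a corollary: any extra atom $b$ would contribute a chain with label $\lambda(\hat 0,b)\notin\{\lambda_{a_{i_j}}\}$. Your ``descend into $[x_1,z]$ and use the inductive hypothesis'' for part~(b) also does not work as stated, since in $[x_1,z]$ the element $z$ need not be a join of atoms of that interval and the number of such atoms bears no simple relation to $r$.
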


\begin{proof}
Theorem ~\ref{cover-enough} proves that the  index 2 formulation of an  $SB$-labeling will always give an $SB$-labeling. 
On the other hand, if $\lambda $ is an $SB$-labeling, then Condition (1) for $SB$-labelings  directly gives Condition (i) in the index 2 formulation for an $SB$-labelings.    Condition (2)  for $SB$-labelings specialized to the  case of a join of two atoms yields exactly   conditions (ii) and (iii) of the index 2 formulation of an  $SB$-labeling.  
\end{proof}

\begin{ex} \label{ex:weak}
In the case of the weak Bruhat order of a finite Coxeter group, we will label each cover relation $u\prec s_i u$ with the label $s_i$ and will prove that this labeling  meets the requirements of the index 2 formulation of an $SB$-labeling.  Theorem ~\ref{weak-theorem} will verify that this is indeed an $SB$-labeling.  See Figure ~\ref{fig:weak} for an example.  To illustrate  part of the subtlety in this definition, notice e.g.~ that  the weak order interval $[\hat{0},s_1s_2]$ has a single maximal chain, and it uses the edge labels $s_1$ and $s_2$.  The label $s_2$  corresponds to an atom while the label $s_1$ does not.    In this weak order interval with only one atom, the conditions above hold vacuously since there is no triple of elements $u,v,w$ as in Definition ~\ref{index-2-definition}.
\end{ex}
\begin{figure}\label{fig:weak}

\begin{center}
\includegraphics[width=4cm]{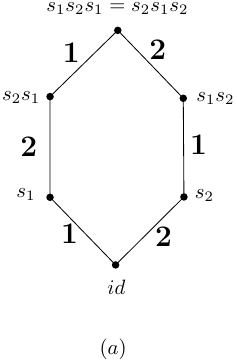}
\quad \quad \quad\quad
\includegraphics[width=5cm]{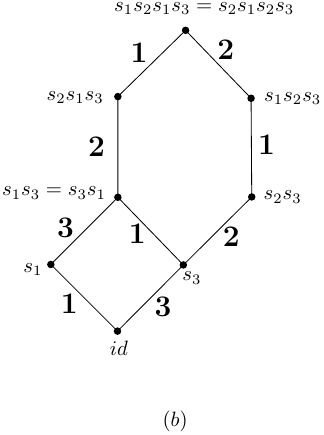}
\caption{ (a) The $SB$-labeling of the weak Bruhat order of $S_3$ as described in Example \ref{ex:weak}. The complex $\Delta (id,s_1s_2s_1)$ is homotopy equivalent to a sphere..
 \newline
(b)  The $SB$-labeling of an interval of the weak Bruhat order of $S_4$ as described in Example \ref{ex:weak}. The complex $\Delta (id,s_1s_2s_1s_3)$ is homotopy equivalent to a ball.}
\end{center}
\end{figure}
 
Next we show how the property of being an SB-lattice gives topological information about the order complex of each open interval in the lattice.

\begin{thm} \label{thm:sb}
If $L$ is an SB-lattice, then  each open interval is homotopy equivalent to a ball or a sphere of some dimension.  Moreover,  $\Delta (u,v)$ is homotopy equivalent to a sphere if and only if $v$ is a join of atoms of the interval, in which case it is a sphere $S^{d-2}$ where $d$ is the number of atoms in the interval.
\end{thm}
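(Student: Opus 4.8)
The plan is to deduce this from Theorem~\ref{homotopy-type-for-atom-labeled-lattices} applied one interval at a time. Fix elements $u<w$ of $L$ and consider the closed interval $[u,w]$, which is itself a finite lattice, with bottom element $u$ and top element $w$. By the definition of an $SB$-labeling (Definition~\ref{lattice-labeling}), the restriction of $\lambda$ to $[u,w]$ satisfies conditions (1) and (2) of that definition, and is therefore a \emph{lower} $SB$-labeling of the lattice $[u,w]$. Thus $[u,w]$ is a finite lattice admitting a lower $SB$-labeling, and Theorem~\ref{homotopy-type-for-atom-labeled-lattices} (together with the argument in its proof, which runs through the crosscut given by the atoms of $[u,w]$) determines the homotopy type of the open interval $(u,w)$, which is the proper part of $[u,w]$.

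Next I would run the case analysis inside $[u,w]$. If $w$ covers $u$, then $(u,w)$ is empty, its order complex is the $(-1)$-sphere $\{\emptyset\}$, and $w$ is the join of the single atom of $[u,w]$ (namely $w$ itself), so $d=1$ and $S^{d-2}=S^{-1}$, matching the assertion (equivalently, $\mu(u,w)=-1$). If instead $|[u,w]|>2$, apply Theorem~\ref{homotopy-type-for-atom-labeled-lattices} to the lattice $[u,w]$ with its lower $SB$-labeling, taking the distinguished element ``$v$'' of that theorem to be the top element $w$. One should observe here that if $w$ is a join of a nonempty set of atoms of $[u,w]$, then it is the join of \emph{all} the atoms of $[u,w]$: the top of a lattice dominates every atom, so the join of all atoms lies between $w$ and $w$. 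Hence the relevant $d$ is exactly the number of atoms of $[u,w]$, and $d\ge 2$ because $|[u,w]|>2$ prevents $w$ from covering $u$. Theorem~\ref{homotopy-type-for-atom-labeled-lattices} then gives $\Delta(u,w)\simeq S^{d-2}$ when $w$ is a join of atoms of $[u,w]$, and gives $\Delta(u,w)$ contractible --- hence homotopy equivalent to a ball --- when $w$ is not a join of atoms of $[u,w]$. This establishes the first sentence of the theorem for every interval $(u,w)$.

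Finally I would pin down the ``if and only if'' clause. The previous paragraph shows that if $v$ is a join of $d$ atoms of the interval (i.e.\ of the lattice $[u,v]$) then $\Delta(u,v)\simeq S^{d-2}$ is a sphere, while if $v$ is not a join of atoms of $[u,v]$ then $\Delta(u,v)$ is contractible, hence has trivial reduced homology, and so is not homotopy equivalent to any sphere (every $S^n$ with $n\ge -1$, including $S^{-1}=\{\emptyset\}$, has nonzero reduced homology). Combining the two directions yields the stated equivalence together with the identification of the sphere dimension as $d-2$.

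As for difficulty: there is no substantial obstacle, since the theorem is essentially a promotion of Theorem~\ref{homotopy-type-for-atom-labeled-lattices} from intervals of the form $(\hat 0,v)$ to arbitrary intervals, using nothing more than the ``restriction to each closed interval'' clause already built into the definition of an $SB$-labeling. The only points requiring care are bookkeeping ones: interpreting the degenerate case of a cover relation as the $(-1)$-sphere, and checking that ``$v$ is a join of atoms of the interval'' forces $v$ to be the join of \emph{all} atoms of $[u,v]$, so that the dimension $d-2$ of the resulting sphere is unambiguous.
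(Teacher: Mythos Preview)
Your proposal is correct and follows exactly the approach of the paper, which simply states that the result follows from Theorem~\ref{homotopy-type-for-atom-labeled-lattices}. You have merely made explicit the bookkeeping details (restricting the $SB$-labeling to $[u,w]$, the degenerate cover-relation case, and the observation that a join of atoms at the top must be the join of \emph{all} atoms) that the paper leaves implicit in its one-line proof.
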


\begin{proof}
First note that it suffices to prove the result for lower intervals $(\hat{0},v)$, by virtue of how Definition ~\ref{lattice-labeling} is formulated.
Now each subset of the atoms  has a distinct join, due to the fact that  the set of labels appearing on the edges of all of the  saturated chains upward  from $\hat{0} $ to a join of atoms is exactly that set of atom labels.  
But this implies (cf.~ Remark ~\ref{distinct-atom-crosscut})   that the crosscut complex for $(\hat{0},v)$ given by the atoms is the boundary of a simplex if $v$ is a join of atoms and is the entire simplex otherwise.  In particular, this means that  the crosscut complex is homotopy equivalent to a sphere $S^{d-2}$ if $v$ is a join of atoms and is contractible otherwise.
Now the Crosscut Theorem (which we recall as Theorem ~\ref{crosscut-theorem}) yields the result.  
\end{proof}

We conclude this section with some relaxations that may be made in the hypotheses of our main results without changing the conclusions.

\begin{rk}\label{locally-finite-variant}
In the notion of an $SB$-labeling, we may replace the finiteness requirement for our lattices by instead requiring them to be locally  finite  with  a unique minimal element.   
Our proofs all go through unchanged in such cases, allowing us to call such lattices $SB$-lattices and draw all of the same conclusions.  Young's lattice will provide one such example, as we will show in Section ~\ref{application-section}.  
\end{rk}

\begin{defn}
Let us say that a finite poset $P$ with unique minimal and maximal elements  is an {\bf atom-near-lattice} if each pair of elements $u,v \in P$ with $u<v$  has the property that each collection $S$  of atoms of the closed interval $[u,v]$ has a unique least upper bound $\vee_{a\in S} a$. 
\end{defn}

\begin{rk}
It is proven in Lemma 2.1 of  \cite{BEZ} that this atom-near-lattice property in fact implies that $P$ is a lattice.  
This property may be easier to check in examples of interest than the property of being a lattice.  Our proofs actually only rely upon this formulation of the lattice property.
\end{rk}

\section{Index 2 formulation is equivalent to an $SB$-labeling}\label{equivalence-section}

This section proves the equivalence of our two different definitions for an $SB$-labeling.
To this end, we will use the next  two  notions to prove that every labeling meeting the conditions in the index 2 formulation for an $SB$-labeling is an $SB$-labeling.

\begin{defn}
We say that a pair of maximal chains $M_1, N_1$ in a finite lattice
are connected by a {\bf basic move} if $M_1$ and $N_1$  coincide except on an open interval $(u,v)$ where $u\prec x$ in $M_1$ and $u\prec y$ in $N_1$ with $x\ne y$ and with  $v = x\vee y$.  
\end{defn}

\begin{ex}\label{braid-special}
For example, in any interval $[u,wu]$ in the weak order the basic moves are given by the long and short braid moves on reduced expressions for the  Coxeter group element $w$. 
\end{ex}

\begin{defn}
Define the {\bf total length} of a closed interval $[u,v]$ to be the sum of the lengths of all the saturated chains in that interval.
\end{defn}
 
 This notion of total length is designed to enable a proof by induction in the next lemma without needing the lattices to be graded and without needing to require the maximal chains of $[a,b]$   to be of maximal length amongst all maximal chains on $[a,b]$.

\begin{lem} \label{sat}  
Any two maximal chains on an interval $[a,b]$ in a finite lattice $L$ are  connected by a series of basic moves.
\end{lem}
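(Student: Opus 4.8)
The plan is to argue by induction on the total length of $[a,b]$ (one could equally induct on the number of elements of $[a,b]$). The base case, in which $[a,b]$ carries a unique saturated chain, is vacuous. For the inductive step, take two saturated chains $M\neq N$ on $[a,b]$ and let $u$ be the top of their common initial segment: writing $M: a=m_0 \prec m_1\prec \cdots \prec m_k=b$ and $N: a=n_0\prec n_1 \prec \cdots \prec n_l = b$, let $j$ be maximal with $m_i=n_i$ for all $i\le j$, and set $u=m_j=n_j$, $x=m_{j+1}$, $y=n_{j+1}$. Since $M\neq N$ one checks $j<\min(k,l)$, so $x,y$ exist, and $x\neq y$ by maximality of $j$; thus $u\prec x$, $u\prec y$, and $u<b$. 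Put $v=x\vee y$; from $x\le b$ and $y\le b$ we get $v\le b$, and from $u\prec x$, $u\prec y$ together with $x\neq y$ we get $x<v$ and $y<v$.

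The idea is to move $M$ and $N$ separately, by basic moves, to two chains that agree on $[a,u]$ and on $[v,b]$ but leave $u$ differently toward $v$; such a pair then differs by a single basic move on $(u,v)$. Concretely, fix a saturated chain $T$ from $v$ to $b$ and saturated chains $R_x$ from $x$ to $v$ and $R_y$ from $y$ to $v$. Consider the sub-interval $[x,b]$. Its total length is strictly smaller than that of $[a,b]$: prepending the fixed segment $a\prec m_1 \prec \cdots \prec u\prec x$ (of length $j+1\ge 1$) injectively sends each saturated chain of $[x,b]$ to a strictly longer saturated chain of $[a,b]$, so the total length of $[a,b]$ exceeds that of $[x,b]$ by at least the number of saturated chains of $[x,b]$. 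Hence the inductive hypothesis applies to $[x,b]$ and connects the portion of $M$ lying in $[x,b]$ to the chain $R_x$ followed by $T$ by basic moves within $[x,b]$. Each such move is performed on an open subinterval $(u',v')\subseteq[x,b]$; since the prepended segment lies entirely in $[a,x]\subseteq[a,u']$ it is untouched, so the move is also a basic move within $[a,b]$ (the relevant join $x'\vee y'$ is computed the same way in $[x,b]$ and in $L$, both arguments lying below $b$). This exhibits $M$, via basic moves in $[a,b]$, as the chain $M': a\prec \cdots \prec u\prec x\prec (R_x)\prec v \prec (T)\prec b$. The same argument on $[y,b]$, using the same $T$, connects $N$ to $N': a\prec \cdots\prec u\prec y\prec (R_y)\prec v\prec (T)\prec b$.

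Finally, $M'$ and $N'$ coincide outside the open interval $(u,v)$: they share the initial segment $a\prec\cdots\prec u$ and the final segment $v\prec (T)\prec b$, and above $u$ the chain $M'$ passes through $x$ while $N'$ passes through $y$, with $x\neq y$ and $v=x\vee y$. Thus $M'$ and $N'$ are related by a single basic move, and concatenating the moves connecting $M$ to $M'$, the move $M'$ to $N'$, and the reverse of the moves connecting $N$ to $N'$ gives a series of basic moves connecting $M$ and $N$, completing the induction.

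I expect the delicate points to be purely combinatorial bookkeeping in the inductive step: choosing $u$ as the top of the common initial segment so that the chains genuinely diverge there, verifying that the induction measure strictly drops on $[x,b]$ and $[y,b]$, and checking that a basic move carried out inside such a sub-interval remains a basic move inside $[a,b]$. No topology enters; the lemma is a careful reduction argument.
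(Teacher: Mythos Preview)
Your proof is correct and follows essentially the same strategy as the paper's: induction (phrased there as a minimal counterexample) on the total length of the interval, diverging at covers $x,y$ of some $u$, routing both chains through $x\vee y$ via the inductive hypothesis on strictly shorter subintervals, and then linking them by the single basic move on $(u,x\vee y)$. The only cosmetic differences are that the paper takes $u$ to be the bottom of the interval (guaranteed by minimality) rather than the top of the common initial segment, and it phrases the middle step as a third application of induction on $[u,x\vee y]$ rather than observing directly that it is one basic move.
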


\proof 
Suppose otherwise.    Then choose   an interval $[u,v]$ where this fails, making   the total length of the interval  as small as possible among all such examples.  Let $M_1$ and $N_1$ be two maximal chains on $[u,v]$ that are not connected by a series of basic moves.  Our minimality assumption on total length  ensures for $u\prec u_1$ in $M_1$ and $u\prec v_1$ in $N_1$ that we must have $u_1\ne v_1$.  We also may assume   $u_1\vee v_1 \ne v$, since otherwise there would be a single basic move connecting $M_1$ to $N_1$ by definition of basic move.

 Our plan in this case is to give a series of  steps $M_1\rightarrow M_2\rightarrow M_3\rightarrow N_1$ which convert $M_1$ to $N_1$ and to show that each of these three  steps may  be achieved through a series of basic moves, hence that their composition may  as well. 
 Let $M_2$ be a maximal chain on $[u, v]$ which  agrees with $M_1$ except possibly on $(u_1,v)$;  $M_2$ is chosen to include $u_1\vee v_1$ (since $v\ne u_1\vee v_1$, we have $u_1\vee v_1 < v$). Since the  interval $[u_1, v]$ has strictly smaller total length than $[u,v]$ and $M_1$ agrees with $M_2$ except on  this interval, we can conclude  there is  a  series of  basic moves converting the restriction of $M_1$ to $[u_1,v ]$ to the restriction of $M_2$ to this same interval, which in turn gives basic moves converting $M_1$ to $M_2$ in $[u,v]$.  Now we similarly may convert $M_2$ to a maximal chain $M_3$ which coincides with $M_2$ except on the interval $(u,u_1\vee v_1)$ and which has $u_1$ replaced by $v_1$;  this interval  also has strictly smaller total length than $[u,v]$, again implying the desired basic moves.  Finally, we note that $M_3$ only differs from $N_1$ on the proper part of the interval $[v_1,v]$, which yet again has strictly smaller total length than $[u,v]$, enabling us to find a series of basic moves converting $M_3$ to $N_1$, completing the result.
\qed

\begin{rk}
Lemma ~\ref{sat}  may be regarded as an abstraction of the idea that the lattice property for the weak Bruhat order of a finite Coxeter group  ensures that any two reduced expressions for the same Coxeter group element are connected by a series of long and short braid moves. 
 This implication in the case of the weak order appears as Theorem 3.3.1 in \cite{BB}.
\end{rk}

\begin{thm}\label{cover-enough}
If a finite lattice $L$ has an edge labeling that  satisfies the index 2 formulation for  an $SB$-labeling, then it is an $SB$-labeling.
\end{thm}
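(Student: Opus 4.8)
The plan is to verify that $\lambda$ satisfies Definition~\ref{lattice-labeling} upon restriction to each closed interval $[u,w]$ of $L$. On such an interval, Condition~(1) for a lower $SB$-labeling (distinct atoms receive distinct labels) is immediate from Condition~(1) of the index~2 formulation applied to the covers $u\prec a$ and $u\prec a'$. Since $w$ enters Condition~(2) only through which atoms are in play, everything reduces to the following statement, to be proved for every $u\in L$: \emph{if $v$ is a join of elements covering $u$, then every saturated chain of $[u,v]$ uses, with positive multiplicity, exactly the labels $\lambda(u,a)$ as $a$ ranges over the elements covering $u$ with $a\le v$; and moreover distinct sets of elements covering $u$ have distinct joins.} Granting this, each join of atoms of $[u,w]$ has a unique underlying atom set, and Condition~(2) follows at once.

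The first step is the observation that a basic move preserves the label set of a saturated chain: if $M_1,N_1$ agree off $(u',x\vee y)$ with $u'\prec x$ in $M_1$ and $u'\prec y$ in $N_1$, then each restricts on $[u',x\vee y]$ to a saturated chain of that interval, and Conditions~(2) and~(3) of the index~2 formulation force both restrictions to use exactly the labels $\lambda(u',x)$ and $\lambda(u',y)$; hence $M_1$ and $N_1$ have the same label set. Together with Lemma~\ref{sat} this shows that all saturated chains of a closed interval $[a,b]$ of $L$ share one common label set, which I denote $\Lambda(a,b)$; concatenating chains then gives $\Lambda(u,v)=\Lambda(u,z)\cup\Lambda(z,v)$ for $u\le z\le v$, and in particular $\Lambda(u,v)=\{\lambda(u,a)\}\cup\Lambda(a,v)$ whenever $a$ covers $u$ with $a\le v$. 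The last identity yields the ``$\supseteq$'' inclusion of the reduced statement by routing through each atom below $v$, so the content is ``$\subseteq$'', that no saturated chain of $[u,v]$ carries an extra label; and by the invariance it suffices to exhibit one chain with no extra label.

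I would prove the reduced statement by induction on $|[u,v]|$, breaking ties by the number $r$ of atoms in the chosen generating set $\{a_1,\dots,a_r\}$ of $v$. The cases $r\le 2$ are precisely Conditions~(1)--(3) of the index~2 formulation. For $r\ge 3$, put $v'=a_1\vee\cdots\vee a_{r-1}$. If $v'=v$, then $a_r$ is a cover of $u$ below $v$ not among $a_1,\dots,a_{r-1}$, contradicting the ``distinct joins'' part of the inductive hypothesis applied to the generating set $\{a_1,\dots,a_{r-1}\}$ of $v$ (same interval, smaller $r$); hence $v'<v$, so $|[u,v']|<|[u,v]|$, and by induction $\Lambda(u,v')=\{\lambda(u,a_1),\dots,\lambda(u,a_{r-1})\}$ with no further cover of $u$ below $v'$. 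Since $\Lambda(u,v)=\Lambda(u,v')\cup\Lambda(v',v)$, the whole problem collapses to the ``one more atom'' extension $v=v'\vee a_r$: one must show $\Lambda(v',v)\subseteq\{\lambda(u,a_1),\dots,\lambda(u,a_r)\}$ and that no cover of $u$ below $v$ lies outside $\{a_1,\dots,a_r\}$.

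This residual analysis of $[v',v]$ with $v=v'\vee a_r$ is the heart of the proof and the step I expect to be the main obstacle. The usable structure is that $u<v'$ (since $v'\ge a_1\vee a_2$), so $[v',v]$ is strictly smaller than $[u,v]$ and the inductive hypothesis applies to it as soon as one knows $v$ is the join of the covers of $v'$ lying below it; that every $p$ with $v'\le p\le v$ satisfies $p\vee a_r=v$; and that Lemma~\ref{sat} allows rerouting saturated chains of $[v',v]$ and of $[u,v]$ at will. The plan is to combine these so that: (a) a size argument forbids the join of the covers of $v'$ below $v$ from being a proper element $z<v$ --- such a $z$ would itself be a join of covers of $v'$ inside a strictly smaller interval, so the inductive hypothesis controls $\Lambda(v',z)$ and the covers below $z$, which one plays against the fact that a saturated chain from $v'$ leaves $v'$ only through one of its covers --- thereby letting the inductive hypothesis be applied to $[v',v]$ itself; and (b) each would-be extra label on a saturated chain of $[v',v]$ is pushed, via basic moves and the identity $\Lambda(u,v)=\{\lambda(u,a)\}\cup\Lambda(a,v)$, into a two-atom interval $[u,a_i\vee a_r]$ (or into a strictly smaller join-of-atoms interval already settled by the induction), where Conditions~(1)--(3) of the index~2 formulation confine it to $\{\lambda(u,a_i),\lambda(u,a_r)\}$. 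Carrying out this bookkeeping carefully --- keeping the label-control and the ``no extra atoms'' statements coupled within a single induction --- is where the real work lies; the remainder is formal manipulation of the $\Lambda(\cdot,\cdot)$ calculus above.
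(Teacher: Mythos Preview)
Your setup is sound: the definition of $\Lambda(a,b)$ via Lemma~\ref{sat} and the observation that basic moves preserve label sets is exactly the content of the paper's Lemma~\ref{same}, and the reduction to exhibiting one saturated chain of $[u,v]$ with no extra labels is correct. The argument that $v'<v$ is also fine.

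The gap is precisely where you flag the main obstacle, and your plan for it does not work as stated. The trouble is a mismatch of base points. The inductive hypothesis applied to the strictly smaller interval $[v',v]$ would at best tell you that $\Lambda(v',v)$ consists of the labels $\lambda(v',c)$ as $c$ ranges over covers of $v'$ below $v$ (assuming you first show $v$ is a join of such covers, which is itself not given). But these are labels on edges out of $v'$, not out of $u$; nothing in the hypotheses ties $\lambda(v',c)$ to any $\lambda(u,a_i)$. Plan~(b) hits the same wall: the identity $\Lambda(u,v)=\Lambda(u,a_i\vee a_r)\cup\Lambda(a_i\vee a_r,v)$ controls the first piece via the index-2 hypothesis but simply pushes the uncontrolled label into $\Lambda(a_i\vee a_r,v)$, which is no better understood than $\Lambda(v',v)$ was. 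Iterating only moves the base point higher; it never relates the residual labels back to the atoms of $[u,v]$.

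The paper's proof avoids this by working constructively rather than by descent on interval size. Starting from $L_{r-1}=[\hat 0,v']$ (whose edges, by the outer induction on $r$, carry only allowed labels), it adjoins $[\hat 0,a_{i_1}\vee a_{i_r}]$ and then repeatedly selects an element $u^{(k)}$ lying below two distinct maximal elements of the current subposet, picks covers $u^{(k)}\prec x_1^{(k)}$ and $u^{(k)}\prec x_2^{(k)}$ \emph{already present in the subposet}, and adjoins all of $[u^{(k)},x_1^{(k)}\vee x_2^{(k)}]$. Because those two edges are already present, their labels are already in the allowed set, and condition~(3) of the index-2 formulation forces every newly added edge to carry one of those same two labels. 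Finiteness guarantees termination at a subposet with a unique maximal element $m\ge a_{i_1}\vee\cdots\vee a_{i_r}$, in which every saturated chain from $\hat 0$ to $m$ is a saturated chain of $L$ using only allowed labels; restricting to one through $a_{i_1}\vee\cdots\vee a_{i_r}$ and invoking the $\Lambda$-invariance finishes. This iterated two-atom merge is the missing idea: it propagates label control upward one index-2 interval at a time, whereas your induction on $|[u,v]|$ attempts to pull information down from $[v',v]$, where the labels live at the wrong height.
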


\begin{proof}
Let $\lambda $ be an edge labeling for a finite lattice $L$ which meets the requirements for the index 2 formulation of an $SB$-labeling.  We will prove by induction on the number $r$ of atoms  that $\lambda $ also meets the requirements to be a lower $SB$-labeling.  In fact, this will imply $\lambda $ is an $SB$-labeling, since  applying this argument to any closed interval will  show we have a lower $SB$-labeling for each closed interval.

  The base case with 1 atom is tautologically true.  
  Let us suppose that   $\{ a_{i_1},\dots ,a_{i_r} \} $ is the set of atoms of $L$.  
 Now consider the interval  $L_{r-1} = [\hat{0},a_{i_1}\vee \cdots \vee a_{i_{r-1}}]$  within $L$.  By  induction, we may assume that  this  uses only the labels $\{ a_{i_1},\dots ,a_{i_{r-1}} \} $.  We will progressively build from $L_{r-1}$ a larger subposet $L_{r-1,1}$  of $L$ all of whose cover relations are cover relations of $L$ with the further property that it includes an upper bound $m$ for $\{ a_{i_1}\dots ,a_{i_r} \} $.  We will deduce from $a_{i_1}\vee \cdots \vee a_{i_r} \le m$ that $[\hat{0},a_{i_1}\vee\cdots\vee a_{i_r} ]$ also uses at most the labels $\{ a_{i_1},\dots ,a_{i_r}\} $.
 Finally, we will also  show that each saturated chain from $\hat{0}$ to 
$a_{i_1}\vee \cdots \vee a_{i_r}$ in fact uses all of these labels.

First we add to  $L_{r-1}$  the additional atom  $a_{i_r} $ as well as all elements belonging to  the closed interval $[\hat{0},  a_{i_1}\vee a_{i_r}]$   to obtain a new  poset  $L_{r-1}^{(1)}$.  By condition (iii) in the index 2 formulation of an $SB$-labeling, this slightly larger poset  still only uses the allowed edge labels, since all of the new cover relations are in the interval $[0,a_{i_1}\vee a_{i_r}]$ which only uses the labels $a_{i_1}$ and $a_{i_r}$.  Now either $a_{i_1}\vee a_{i_r}\in L_{r-1}$, in which case we are done constructing $L_{r-1,1}$, or there are at least two different maximal elements in
$L_{r-1}^{(1)}$.  For each maximal element $m_i^{(1)} \in L_{r-1}^{(1)}$, let $P_i$ be the subposet of elements  $x\in L_{r-1}^{(1)}$ satisfying $x\le m_i^{(1)}$.  Choose $u^{(1)}$ to be an element that is contained in both $P_j$ and $P_k$ for some  $j\ne k$ such that there are no elements strictly greater than $u^{(1)}$ also having the property of being contained in some $P_{j'}$ as well as some $P_{k'}$ for $j'\ne k'$; finiteness of $L_{r-1}^{(1)}$ guarantees the existence of such an element $u^{(1)}$.

 Now consider cover relations 
$u^{(1)}\prec x_1^{(1)}$ and $u^{(1)}\prec x_2^{(1)}$ in $L_{r-1}^{(1)}$  such that $x_1^{(1)} \le m_j^{(1)}$ and $x_2^{(1)} \le m_k^{(1)}$ in $L_{r-1}^{(1)}$.  Obtain from $L_{r-1}^{(1)}$ a strictly  larger poset $L_{r-1}^{(2)}$ by adding all elements and cover relations from the interval $[u^{(1)},x_1^{(1)} \vee x_2^{(1)} ]$.  Again by condition   (iii), this cannot introduce any new labels.
 Again, we either have a unique maximal element or we  have two different maximal elements, allowing us to apply this same procedure and do so  repeatedly until we have a unique maximal element $m$.  Specifically, at the $n$-th iteration of the procedure, the input is a poset  $L_{r-1}^{(n-1)}$ having distinct  maximal elements. 
  This allows us to find an element $u^{(n)}$ satisfying the same criterion at this step that  $u^{(1)}$ satisfied at the first step, now using distinct maximal elements $m_j^{(n-1)}$ and $m_k^{(n-1)}$ from
  $L_{r-1}^{(n-1)}$.  
  This in turn ensures there are elements $x_1^{(n)}$ and $x_2^{(n)}$ with $u^{(n)} \prec x_1^{(n)} \le m_j^{(n-1)}$ and $u^{(n)}\prec x_2^{(n)} \le m_k^{(n-1)}$ such that $x_1^{(n)}$ is not less than or equal to any maximal element other than $m_j^{(n-1)}$,  and  
  $x_2^{(n)}$ is not less than or equal to any maximal element other than $m_k^{(n-1)}$.
  We obtain $L_{r-1}^{(n)}$ as the poset $L_{r-1}^{(n-1)}$  with the additional elements and cover relations from the  interval $[u^{(n)},x_1^{(n)} \vee x_2^{(n)}]$  added to it.

 We  iterate  this process
   until it yields a poset $L_{r-1,1}$  with a unique maximal element $m$.
 This process must terminate within finitely many iterations due to finiteness of our original  lattice.
    By construction, the unique maximal element $m$  of $L_{r-1,1}$
         will  be an upper bound for $\{ a_{i_1},\dots ,a_{i_r} \} $, and we will  have only used the labels $a_{i_1},\dots ,a_{i_r}$ on the poset $L_{r-1,1}$  obtained by this process.   The fact that we only ever insert cover relations from the original lattice implies that each saturated chain in $L_{r-1,1}$  from $\hat{0}$ to $m$ is also a saturated chain in the original lattice $L$.    Since 
$a_{i_1}\vee\cdots \vee a_{i_r} \le m$ in $L$,   there is a saturated chain from $\hat{0} $ to $m$ in $L$ which includes the element $a_{i_1}\vee \cdots \vee a_{i_r}$.  By   Lemma ~\ref{same}, this  implies that  the set of labels on each saturated chain from $\hat{0}$  to $a_{i_1}\vee \cdots \vee a_{i_r}$  must be a subset of the set of labels on a saturated chain from $\hat{0}$ to $m$.  Thus,  no labels other than $a_{i_1},\dots ,a_{i_r}$ appear on any saturated chain  from $\hat{0} $ to $a_{i_1}\vee\cdots \vee a_{i_r}$. 

Now let us show  that each saturated chain from $\hat{0} $ to $a_{i_1}\vee \cdots \vee a_{i_r}$ uses each of the   labels $\{ a_{i_1}, \dots ,a_{i_r}\} $ a positive number of times.
 The point is that each atom $a_{i_j}$ for $1\le j\le r$  is in some maximal chain in $[\hat{0}, a_{i_1}\vee\cdots \vee a_{i_r}]$, implying that there exists a maximal chain using the label $a_{i_j}$;
   but this implies that all maximal chains use $a_{i_j}$ for each $1\le j\le r$, by Lemma ~\ref{same}. 
 In conclusion, we have shown that each maximal chain uses exactly the set of labels $\{ a_{i_1},\dots ,a_{i_r} \} $,  each with positive multiplicity.
\end{proof}

\begin{cor}
An edge labeling of a  finite lattice is an $SB$-labeling if and only if it satisfies the index 2 formulation of an
$SB$-labeling.
\end{cor}

\begin{proof}
One direction is proven as Theorem ~\ref{cover-enough}.  The other direction follows immediately  from the fact that the index 2 formulation of an $SB$-labeling is a special case of the notion of an $SB$-labeling.  
\end{proof}

To complete the proof of Theorem ~\ref{cover-enough}, 
what remains is to prove Lemma ~\ref{same}, using Lemma ~\ref{2a} below to do this.

\begin{lem} \label{2a} 
Let  $L$ be a finite lattice with an edge labeling $\lambda $  which  satisfies the index 2 formulation for an $SB$-labeling.  Then for any $u,v,w\in L $ with $v$ and $w$ both covering $u$, the  interval $[u,v\vee w]$   cannot have any atoms other than $v$ and $w$.
\end{lem}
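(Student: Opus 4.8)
The plan is to prove the statement directly: I will take an arbitrary atom $b$ of the interval $[u,v\vee w]$ and show that $b\in\{v,w\}$. The two ingredients are condition (3) of the index 2 formulation, which constrains which labels can occur on saturated chains of $[u,v\vee w]$, together with condition (1), which says that distinct elements covering a common element $u$ receive distinct edge labels from $u$.

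First I would observe that, since $L$ is a finite lattice and $u\prec b\le v\vee w$ with $v\vee w>u$, there is a saturated chain $u=b_0\prec b=b_1\prec b_2\prec\cdots\prec b_k=v\vee w$ passing through $b$: just concatenate the one-step chain $u\prec b$ with any saturated chain of the interval $[b,v\vee w]$. Applying condition (3) of the index 2 formulation to the triple $(u,v,w)$ --- legitimate precisely because $v$ and $w$ both cover $u$ --- every label occurring on this saturated chain of $[u,v\vee w]$ lies in $\{\lambda(u,v),\lambda(u,w)\}$; in particular $\lambda(u,b)=\lambda(b_0,b_1)\in\{\lambda(u,v),\lambda(u,w)\}$.

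Now I would split into two cases. If $\lambda(u,b)=\lambda(u,v)$ and $b\ne v$, then $b$ and $v$ are distinct elements each covering $u$, so condition (1) of the index 2 formulation applied to the triple $(u,v,b)$ forces $\lambda(u,v)\ne\lambda(u,b)$, a contradiction; hence $b=v$. Symmetrically, if $\lambda(u,b)=\lambda(u,w)$ then $b=w$. Thus every atom of $[u,v\vee w]$ equals $v$ or $w$, which is the claim. (Conversely, $v$ and $w$ are indeed atoms of $[u,v\vee w]$, since each covers $u$ and is bounded above by $v\vee w>u$.)

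I do not anticipate a genuine obstacle here; the only point to get right is that one must first pass from the single cover relation $u\prec b$ to a full saturated chain reaching $v\vee w$ before condition (3) can be invoked. As an alternative, one can deduce the lemma from Corollary~\ref{joins} by restricting $\lambda$ to the closed interval $[u,\hat{1}]$: this restriction again satisfies the index 2 formulation, $v$ and $w$ are two atoms of $[u,\hat{1}]$, and the join $v\vee w$ computed in $[u,\hat{1}]$ agrees with $v\vee w$ in $L$, so Corollary~\ref{joins} applied inside $[u,\hat{1}]$ with $d=2$ gives exactly the assertion.
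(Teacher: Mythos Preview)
Your main argument is correct and follows essentially the same line as the paper's: both show that the edge label from $u$ to any atom of $[u,v\vee w]$ must lie in $\{\lambda(u,v),\lambda(u,w)\}$ and then invoke condition~(1) to identify the atom with $v$ or $w$. Your version is in fact slightly more direct: the paper argues by contradiction, applying condition~(2) on the subinterval $[u,x\vee v]$ (for a putative third atom $x$) to force the forbidden label $\lambda(u,x)$ onto some saturated chain of $[u,v\vee w]$, whereas you simply extend the single edge $u\prec b$ to a full saturated chain of $[u,v\vee w]$ and read off $\lambda(u,b)\in\{\lambda(u,v),\lambda(u,w)\}$ from condition~(3) immediately, so condition~(2) is never needed.

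One caution about your proposed alternative route: in the paper's logical structure, Corollary~\ref{joins} is a consequence of Theorem~\ref{cover-enough}, whose proof invokes Lemma~\ref{same}, which in turn relies on the present lemma. Deducing Lemma~\ref{2a} from Corollary~\ref{joins} would therefore be circular as the paper is organized; your direct argument is the one to keep.
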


\proof
If there were another atom $x$ in $[u,v\vee w]$, then we must have $\lambda (u,x) \ne \lambda (u,v)$ and $\lambda (u,x) \ne \lambda (u,w)$.   We also must have $x\vee v\le v\vee w$ since $x\le v\vee w$ and $v \le v\vee w$.  By virtue of $\lambda $ meeting the index 2 formulation for an  $SB$-labeling, we have that every maximal  chain on the interval $[u,x\vee v]$ must use the label $\lambda (u,x)$.  But then  there will be maximal  chains on the interval $[u,v\vee w]$ which
also must use the label $\lambda (u,x)$, a contradiction.
\qed
 
 \medskip

 \begin{lem} \label{same}
 If  an edge labeling $\lambda $ on a finite lattice $L$ 
 meets the conditions for the index 2 formulation of an $SB$-labeling, then this   guarantees for each interval $[a,b]$ in $L$  that any two saturated chains $M_1$ and $N_1$  on $[a,b]$  must   use the same set of labels  each a positive number of times, though not necessarily  with the same multiplicities. 
\end{lem}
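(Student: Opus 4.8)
The plan is to deduce this from Lemma~\ref{sat}, which says that any two saturated chains $M_1$ and $N_1$ on $[a,b]$ are connected by a finite series of basic moves. The relation ``uses the same set of labels, each a positive number of times'' is evidently symmetric and transitive on saturated chains, so it suffices to verify that a single basic move preserves this invariant: if $M_1$ and $N_1$ differ by one basic move, then they use the same label set, and a label occurs on $M_1$ iff it occurs on $N_1$ (with each such label occurring at least once on each).

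So suppose $M_1$ and $N_1$ differ by a basic move: they coincide except on an open interval $(u,v)$, where $u \prec x$ along $M_1$, $u \prec y$ along $N_1$, $x \ne y$, and $v = x \vee y$. Outside the closed interval $[u,v]$ the two chains are literally identical, hence contribute exactly the same multiset of labels there; in particular every label occurring on that common portion occurs on both $M_1$ and $N_1$. It remains to compare the labels used on the restrictions $M_1|_{[u,v]}$ and $N_1|_{[u,v]}$. Here $x$ and $y$ are distinct elements both covering $u$ with $v = x \vee y$, so $[u,v] = [u, x\vee y]$ is precisely an interval of the form occurring in the index 2 formulation of $SB$-labeling, with the roles of $v, w$ there played by $x, y$. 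Conditions (2) and (3) of that formulation then say that every saturated chain on $[u,v]$ — in particular both $M_1|_{[u,v]}$ and $N_1|_{[u,v]}$ — uses the labels $\lambda(u,x)$ and $\lambda(u,y)$ each a positive number of times, and uses no other labels. Hence $M_1|_{[u,v]}$ and $N_1|_{[u,v]}$ use exactly the same set of labels, namely $\{\lambda(u,x), \lambda(u,y)\}$, each positively.

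Combining the two parts, the set of labels appearing on $M_1$ is the union of the label set of the common outer portion with $\{\lambda(u,x),\lambda(u,y)\}$, which is the same set as for $N_1$, and every label in this set appears at least once on each chain. Thus a basic move preserves the invariant, and by Lemma~\ref{sat} together with transitivity, any two saturated chains on $[a,b]$ use the same set of labels, each a positive number of times. I do not anticipate a genuine obstacle here beyond checking that the data of a basic move (two distinct covers $x, y$ of $u$ with $v = x \vee y$) match exactly the hypotheses of the index 2 formulation; the substantive work, namely that saturated chains are connected by basic moves at all, is already carried out in Lemma~\ref{sat}.
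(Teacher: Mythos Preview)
Your argument is correct and follows essentially the same route as the paper: reduce to a single basic move via Lemma~\ref{sat}, then observe that on the interval $[u,x\vee y]$ every saturated chain uses exactly the label set $\{\lambda(u,x),\lambda(u,y)\}$. The paper phrases the second step as an appeal to Lemma~\ref{2a}, whereas you invoke conditions (2) and (3) of the index~2 formulation directly; your version is arguably the cleaner justification, but the underlying idea is the same.
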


\begin{proof} 
 Lemma \ref{2a} implies that   any two maximal  chains $M_1$ and $N_1$  on an interval $[a,b]$  in a finite lattice  that are connected by a series of basic moves  use the same set of labels (though not necessarily with the same multiplicities).  Lemma \ref{sat} checks that any two saturated chains $M_1$ and $N_1$  from $a$ to $b$ in a finite lattice  are connected by a series of basic moves, so  the result follows. 
 \end{proof}

\section{Applications}\label{application-section}

Now we turn to applications, beginning with finite distributive lattices.  In this first example, the $SB$-labeling  we give is  also a well-known  $EL$-labeling, implying the posets are shellable.  The homotopy type of the intervals in finite distributive lattices was determined in \cite{Bjorner}  indirectly by virtue of finite distributive lattices also  being finite  supersolvable lattices, relying on an earlier $R$-labeling given by Stanley in \cite{Stanley}  for finite supersolvable lattices.

\begin{thm}\label{finite-distributive-lattice}
Any finite distributive lattice  is an $SB$-lattice.   
\end{thm}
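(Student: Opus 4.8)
The plan is to use Birkhoff's representation theorem to identify $L$ with the lattice $J(P)$ of order ideals of some finite poset $P$, ordered by inclusion, and then to write down an edge labeling satisfying the index 2 formulation for $SB$-labeling; Theorem~\ref{equivalent-definitions} will then promote it to an $SB$-labeling. Under this identification every cover relation of $L$ has the form $I\prec I\cup\{p\}$ for a unique $p\in P$ that is minimal in $P\setminus I$, and the labeling I propose is $\lambda(I\prec I\cup\{p\})=p$, taking the ground set $P$ (or, if one wants an integer label set, the image of $P$ under a fixed linear extension, which recovers the classical $EL$-labeling) as the set of labels.

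To verify the three conditions of the index 2 formulation, fix $u,v,w\in L$ with $v$ and $w$ both covering $u$. Writing $u=I$, we have $v=I\cup\{p\}$ and $w=I\cup\{q\}$ for distinct elements $p,q$, each minimal in $P\setminus I$. Then $p$ and $q$ are incomparable in $P$, since (say) $p<q$ would put the element $p$ of $P\setminus I$ strictly below $q$, contradicting minimality of $q$ in $P\setminus I$. Hence the only order ideals $J$ with $I\subseteq J\subseteq I\cup\{p,q\}$ are $I$, $I\cup\{p\}$, $I\cup\{q\}$, and $I\cup\{p,q\}$, so $v\vee w=I\cup\{p,q\}$ and $[u,v\vee w]$ is a rank-two Boolean lattice. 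Its two saturated chains are $I\prec I\cup\{p\}\prec I\cup\{p,q\}$ and $I\prec I\cup\{q\}\prec I\cup\{p,q\}$; the first carries labels $p$ then $q$, the second $q$ then $p$. Thus $\lambda(u,v)=p\ne q=\lambda(u,w)$, each saturated chain on $[u,v\vee w]$ uses both $p$ and $q$ a positive number of times, and no labels other than $p$ and $q$ occur — which are exactly conditions (1), (2), (3).

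Since $u,v,w$ were arbitrary, $\lambda$ meets the index 2 formulation, and Theorem~\ref{equivalent-definitions} then yields that $\lambda$ is an $SB$-labeling, so $L$ is an $SB$-lattice. I do not expect any real obstacle here: the index 2 formulation constrains only the rank-two intervals $[u,v\vee w]$, and in a distributive lattice each of these is forced to be a diamond whose edge labels are determined by the two incomparable poset elements adjoined along it. The only mild point is that the labeling must be globally coherent across all of $L$ rather than chosen interval by interval, and the Birkhoff description supplies such a global labeling automatically.
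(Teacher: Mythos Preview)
Your proof is correct and follows essentially the same route as the paper: identify $L$ with $J(P)$ via Birkhoff, label each cover $I\prec I\cup\{p\}$ by $p$, and verify the index~2 formulation by observing that $[u,v\vee w]$ is a diamond with edge labels $\{p,q\}$. If anything, you supply more detail than the paper (e.g.\ the incomparability of $p$ and $q$), but the argument is the same.
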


\begin{proof}
We  will use the fact that any finite distribute lattice  $L$ is the poset $J(P)$ of order ideals of a finite poset $P$ ordered by inclusion (cf.~ \cite{ec1}, Theorem 3.4.1).  This allows us to regard each cover relation $u\prec v$ as adding to the order ideal associated to $u$ 
a single element $p\in P$  We use this element $p$  as the label for $u\prec v$.  Whenever we have $u\prec v$ and $u\prec w$, then this implies that there are two different elements of $P$, either of which  may individually  be added to the order ideal given by $u$ to obtain a new order ideal.  Therefore, $v\vee w$ covers both $v$ and $w$ with the further property that there cannot be any other elements $z$ with $u < z < v\vee w$.  From this,  Conditions (i), (ii) and (iii)  for the index 2 formulation for an  $SB$-labeling follow directly.
\end{proof}

Recall that Young's lattice is the poset of integer partitions regarded as Young diagrams, with $u\prec v$ whenever $v$ is obtained from $u$ by adding a single box.  
Since Young's lattice is a locally finite,  distributive lattice with a unique minimal element,  Theorem ~\ref{finite-distributive-lattice} together with Remark ~\ref{locally-finite-variant} allows us to  conclude:
 
\begin{cor}
Young's lattice is an  $SB$-lattice. 
\end{cor}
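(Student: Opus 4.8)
The plan is to deduce this directly from Theorem~\ref{finite-distributive-lattice} via the locally finite variant recorded in Remark~\ref{locally-finite-variant}, so that the only work is to check that Young's lattice $Y$ satisfies the hypotheses of that remark and that the distributive-lattice labeling still makes sense in the infinite setting.

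First I would recall the standard identification of $Y$ with the poset $J_f(\mathbb{N}\times\mathbb{N})$ of \emph{finite} order ideals of $\mathbb{N}\times\mathbb{N}$ under the coordinatewise order, ordered by inclusion: a partition $\lambda$ is sent to its set of cells, which is a finite order ideal, and conversely every finite order ideal arises this way. Under this identification the join of two elements is their union and the meet is their intersection (both again finite order ideals), so $Y$ is a lattice and the distributive laws hold setwise; the empty ideal, i.e.\ the empty partition, is the unique minimal element. Local finiteness is equally immediate: a closed interval $[\mu,\lambda]$ consists of the partitions $\nu$ with $\mu\subseteq\nu\subseteq\lambda$, and there are only finitely many order ideals sandwiched between the finite sets $\mu$ and $\lambda$.

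Next I would equip $Y$ with exactly the labeling used in the proof of Theorem~\ref{finite-distributive-lattice}: a cover relation $u\prec v$ adds a single box $p\in\mathbb{N}\times\mathbb{N}$ to the Young diagram, and we label that cover by $p$. The three conditions of the index~2 formulation of $SB$-labeling are statements about a cover pair $u\prec v$, $u\prec w$ and the interval $[u,v\vee w]$; each such interval is finite (by local finiteness) and coincides with an interval in the finite distributive lattice generated by $u$, $v$, $w$, so the verification carried out in the proof of Theorem~\ref{finite-distributive-lattice} applies verbatim. By Remark~\ref{locally-finite-variant}, since $Y$ is a locally finite distributive lattice with a unique minimal element and the labeling satisfies the $SB$-conditions on every closed interval, $Y$ is an $SB$-lattice.

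The only point requiring any care — and it is precisely the reason the locally finite variant is needed at all — is that the $SB$-labeling axioms are genuinely \emph{finitary}: they quantify only over cover relations and over the finite intervals $[\hat{0},a_{i_1}\vee\cdots\vee a_{i_r}]$ and $[u,v\vee w]$ that they generate, never over the whole (infinite) poset at once, so dropping global finiteness introduces no new phenomena. Once this is observed there is no further obstacle.
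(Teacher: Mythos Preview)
Your proof is correct and follows exactly the approach the paper takes: the corollary is deduced immediately from Theorem~\ref{finite-distributive-lattice} together with Remark~\ref{locally-finite-variant}, using that Young's lattice is a locally finite distributive lattice with a unique minimal element. Your additional explicit verifications (the identification with $J_f(\mathbb{N}\times\mathbb{N})$, local finiteness, and the finitary nature of the axioms) are sound elaborations of points the paper leaves implicit.
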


Next we turn to a non-shellable example, the weak Bruhat order  on the elements of a  finite Coxeter group $W$.
Let $S$ be the set of  simple reflections generating $W$.   The  (left) 
weak Bruhat order has as its  cover relations each  $w\prec s_iw$ for $w\in W $ and $s_i\in S$ with $l(w) < l(s_i w)$, letting  $l(w)$ denote the Coxeter-theoretic length of $w$.  See e.g. \cite{BB} or \cite{Hu} for further background on Coxeter groups and on the  weak Bruhat order.    The homotopy type of each interval was originally determined in \cite{bj} (see also \cite{Ed} and \cite{EW} 
for related results regarding posets of regions).  We will use the following properties of weak Bruhat order.

\begin{enumerate}
\item
There is an 
isomorphism of weak Bruhat order
intervals $[u,w] \simeq [e,wu^{-1}]$ for each $u\le w$.  This appears as 
Proposition 3.1.6 in \cite{BB}.
\item  
 There is a characterization of the joins of finite sets of atoms in Lemma 3.2.3 in \cite{BB} as exactly those Coxeter group elements which are longest elements of parabolic subgroups of $W$; specifically, the join of the collection of atoms corresponding to the simple reflections in $S = \{ s_{i_1},\dots ,s_{i_r} \} $ will be  $w_\circ (S)$, namely the longest element of the parabolic subgroup $W_S$ generated by the elements of $S$.
  \item
 There is a bijection between the maximal chains in the interval $[u,w]$ and the reduced expressions for $wu^{-1}$. This appears in Proposition 3.1.2, part (i),  in \cite{BB}.
 \item
 The atoms of $[e,w_\circ (S)]$ are exactly the atoms $a_s$ for $s\in S$, namely the atoms naturally corresponding to the elements of $S$.  A proof of this may be found in  
 Propositions 3.1.2, parts (iii) and (iv)  in \cite{BB}.
 \item
 The weak Bruhat order is a lattice.  This is proven e.g.~ in Theorem 3.2.1 in \cite{BB}
\end{enumerate}

\begin{thm}\label{weak-theorem}
The weak Bruhat  order for any finite Coxeter group $W$ is an  $SB$-lattice.   Moreover, an open  interval $(u,w)$ in $W$  is homotopy equivalent to a sphere $S^{d-2}$ if  $wu^{-1}$ is the longest element of a parabolic subgroup $W_S$ with $|S|=d$, and $(u,w)$ is contractible otherwise.
\end{thm}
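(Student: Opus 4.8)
The plan is to apply the index 2 formulation of $SB$-labeling (Theorem~\ref{equivalent-definitions}), using two standard facts about the weak order on a finite Coxeter group $W$: it is a lattice (classical; see \cite{BB}), and multiplication by $u^{-1}$ induces an edge-label-preserving isomorphism from any closed interval $[u,w]$ onto the lower interval $[\hat 0, u^{-1}w]$, carrying the label on a cover $x\prec s_ix$ to the same label $s_i$. We label each cover relation $w\prec s_iw$ by the simple reflection $s_i$; since the atoms of $W$ in the weak order are exactly the simple reflections, this assigns distinct labels to the covers up from $\hat 0$. For the index 2 conditions at a triple $u\prec v$, $u\prec w$, write $v=s_iu$ and $w=s_ju$; since $v\ne w$ forces $i\ne j$, we get $\lambda(u,v)=s_i\ne s_j=\lambda(u,w)$, establishing condition (1). (As noted after Definition~\ref{lattice-labeling}, one actually only needs the atom-near-lattice property, which here is just the classical lattice property of finite $W$.)

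For conditions (2) and (3) I would translate to the lower interval $[\hat 0, s_i\vee s_j]$ and use the key Coxeter-theoretic input: for $J\subseteq S$ with $W_J=\langle J\rangle$ finite (automatic here), the join $\bigvee_{s\in J}s$ in the weak order equals the longest element $w_0(W_J)$ of $W_J$, and the lower interval $[\hat 0, w_0(W_J)]$ is exactly the weak order on $W_J$. Indeed $w_0(W_J)$ lies above every $s\in J$ because its descent set equals $J$, while any upper bound $y$ of $\{s:s\in J\}$ has all generators of $W_J$ among its descents and therefore has $w_0(W_J)$ as a full-length factor, i.e.\ $w_0(W_J)\le y$ (\cite{BB}, \cite{Hu}). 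Taking $J=\{i,j\}$, the interval $[u,v\vee w]$ is isomorphic to the weak order on the dihedral group $W_{\{i,j\}}$ -- a ``polygon'' having exactly two saturated chains from $\hat 0$ to $w_0(W_{\{i,j\}})$, one labeled $s_i,s_j,s_i,\dots$ and the other labeled $s_j,s_i,s_j,\dots$, each of length $m_{ij}$, the order of $s_is_j$. Since $m_{ij}\ge 2$, each of these chains uses both $s_i$ and $s_j$ a positive number of times (condition (2)) and uses no other label (condition (3)). Hence the labeling satisfies the index 2 formulation, so by Theorem~\ref{cover-enough} it is an $SB$-labeling and $W$ is an $SB$-lattice.

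For the ``Moreover'' statement I would apply Theorem~\ref{thm:sb}: $(u,w)$ is homotopy equivalent to a sphere precisely when $w$ is a join of the atoms of $[u,w]$, in which case it is $S^{d-2}$ with $d$ the number of atoms. Under the isomorphism $[u,w]\cong[\hat 0,z]$ with $z=u^{-1}w$, the atoms of $[\hat 0,z]$ are exactly the simple reflections $s$ with $s\le z$, i.e.\ the elements of the descent set $J$ of $z$; by the computation in the previous paragraph their join is $w_0(W_J)$. Hence $z$ is the join of its atoms if and only if $z=w_0(W_J)$, and in that case $d=|J|$. This is exactly the claimed dichotomy: $(u,w)\simeq S^{|J|-2}$ when $u^{-1}w$ is the longest element of the parabolic $W_J$ with $|J|=d$, and $(u,w)$ is contractible otherwise.

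I expect the main obstacle to be the Coxeter-combinatorial input of the second paragraph: proving cleanly that the join of a set of simple reflections in the weak order is the longest element of the parabolic subgroup they generate, equivalently that every rank-$2$ interval $[u,v\vee w]$ is a dihedral weak order. The remaining ingredients -- identifying the atoms of a lower interval with a descent set, the alternation of the two label sequences around a polygon, and the translation of intervals by group elements -- are routine, and the topological conclusions then follow at once from Theorems~\ref{cover-enough} and~\ref{thm:sb}.
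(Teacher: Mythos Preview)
Your proposal is correct and follows essentially the same approach as the paper: you use the identical edge labeling by simple reflections, verify the index~2 formulation by reducing via the interval isomorphism $[u,w]\cong[\hat 0,u^{-1}w]$ to the dihedral weak order on $W_{\{i,j\}}$, and then invoke the characterization of joins of atoms as longest elements of parabolics together with Theorem~\ref{thm:sb} for the homotopy type. The only differences are cosmetic---you sketch directly why $\bigvee_{s\in J}s=w_0(W_J)$ where the paper simply cites \cite{BB}, Lemma~3.2.3, and you phrase the atom set of $[\hat 0,z]$ as a descent set rather than invoking fact~(3) about reduced expressions for $w_0(J)$.
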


\begin{proof}
We will prove the result for left weak Bruhat order, noting that a completely analogous proof holds
for right weak Bruhat order.
By property (5) above, the weak Bruhat order of a finite Coxeter group  is a lattice.
To obtain an $SB$-labeling, we will label each cover relation
$u\prec  v$ with the unique  simple reflection $s_i$ such that $v = s_iu$.

Next let us  justify 
the following  claim: for any two distinct
cover relations $u\prec v$ and $u\prec w$, there are unique saturated chains $u\prec v\prec \cdots \prec v\vee w$ and $u\prec w\prec\cdots \prec v\vee w$ from  $u$ to $v\vee w$ and no other saturated chains from  $u$ to $v\vee w$; moreover,  these two saturated chains  have label sequences $s_i s_j s_i \cdots $ and $s_j s_i s_j \cdots $  each consisting of an alternation of only  the letters $s_i$ and $s_j$, with each label sequence having the same length $m(i,j)$ where $m(i,j)$ is the order of the Coxeter group element $s_is_j$.  In the case with $u=e$, this claim  holds  by property (2) above.  Otherwise, we use property (1) above to reduce the claim  to the case with $u=e$.    This claim shows that all of the requirements for the index 2 formulation of an $SB$-labeling are indeed satisfied, implying that weak Bruhat order is an $SB$-lattice.

Properties  (1) and (2) above also combine to imply that $w$ will be a join of atoms of the interval $[u,w]$  if and only if $wu^{-1}$ is the longest element  $w_\circ (W_S)$ 
of some parabolic subgroup  $W_S$ of $W$.  
Letting $A$ be the set of atoms of $[u,w]$, Theorem 
~\ref{thm:sb} implies  that  $[u,w]$ has the homotopy type of a sphere $S^{|A|-2}$ 
  if and only if $wu^{-1}$ is the longest element of some parabolic subgroup 
$W_S$ of $W$ and is contractible otherwise. 
In the case that $wu^{-1}  = w_\circ (W_S)$, 
property (4) above  
 guarantees that $ |A| = |S| $.  
\end{proof}

The Tamari lattice, our next example,  is a partial order on the 
 triangulations of an $(n+1)$-gon, as discussed e.g. in \cite{ER}.  To define the cover relations, it is 
 convenient to label the $n+1$ vertices of the $(n+1)$-gon with the integers $1,2,\dots ,n+1$, proceeding clockwise about the boundary of the $(n+1)$-gon.  Now each cover relation $u\prec v$ will replace an edge $e_{i,k}$ in a triangulation $u$ by an edge $e_{j,l}$ to obtain the triangulation $v$, subject to  the requirement  that we have  $i<j<k<l$. See Figure \ref{fig:tamari} for an illustration.

\begin{figure}
\centering
\includegraphics[width=10cm]{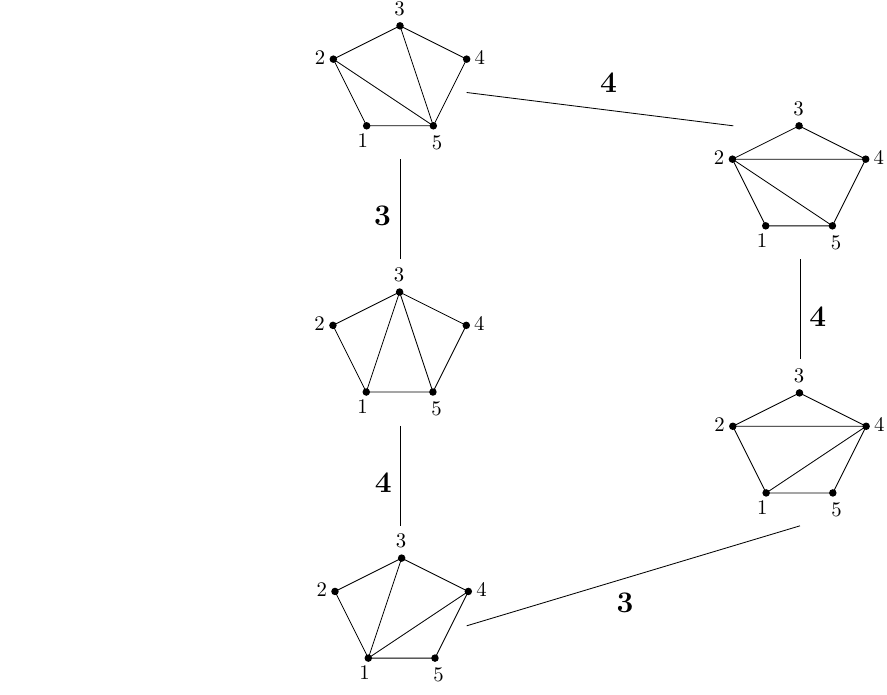}
\caption{The Tamari lattice for $n=4$ with the $SB$-labeling from the proof of Theorem \ref{thm:tamari}.} 
\label{fig:tamari}
\end{figure}

 The  significance of the Tamari lattice comes in part from the fact that its Hasse diagram is the 1-skeleton of the associahedron, a polytope which goes back to work on homotopy associative $H$-spaces by Stasheff (cf.~ \cite{Stasheff}).  The number of elements in the Tamari lattice is a Catalan number.  It was proven to be non-pure shellable with each interval having the homotopy type of a ball or a sphere by Bj\"orner and Wachs in \cite{BW}.  Earlier results regarding its M\"obius function and implicitly regarding its topological structure also appear in \cite{Pallo}.
 See e.g.~  \cite{BW}, \cite{HT}, \cite{Lo}  for further background on the Tamari lattice.  
  
\begin{rk}
The symmetry in this definition for the Tamari lattice guarantees that it will be self-dual, so that having an $SB$-labeling will be equivalent to its dual poset  having an $SB$-labeling.
\end{rk}

\begin{thm} \label{thm:tamari}
The Tamari lattice is an SB-lattice.
\end{thm}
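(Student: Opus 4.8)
The plan is to verify the three conditions in the index~2 formulation of $SB$-labeling directly, using the combinatorial structure of the associahedron, and then invoke Theorem~\ref{cover-enough} and Theorem~\ref{thm:sb}. First I would fix a labeling: a cover relation $u \prec v$ in the Tamari lattice is obtained by a single rotation, replacing a subexpression $((x,y),z)$ by $(x,(y,z))$, and this rotation is naturally indexed by an internal edge of the binary tree (equivalently, a non-leaf, non-root internal node, or a vertex of the associahedron's dual picture); I would label the cover relation $u \prec v$ by the location of the rotated node, recorded in a way that is intrinsic to $u$ (say, by the pair $(x,(y,z))$ or by which ``diagonal'' of the polygon triangulation is being flipped). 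The point of choosing an $SB$-labeling rather than an $EL$-labeling is that we only need to control intervals of the form $[u, v \vee w]$ where $v, w$ both cover $u$.

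The key geometric fact I would use is that two distinct rotations applied at $u$ act on ``disjoint'' or ``nested'' parts of the bracketing, and in either case the two resulting rotations commute: performing one does not destroy the opportunity to perform the other, and the order does not matter. More precisely, if $v$ and $w$ both cover $u$, then in the polygon/triangulation model $v$ and $w$ correspond to flipping two distinct diagonals $d_1, d_2$ of the triangulation of $u$; these two flips are independent, $v \vee w$ is the triangulation obtained by flipping both, the interval $[u, v\vee w]$ is a Boolean interval of rank~$2$ (a ``square'' $2$-face of the associahedron), and the only two saturated chains from $u$ to $v\vee w$ are $u \prec v \prec v\vee w$ and $u \prec w \prec v\vee w$. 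The one subtlety is to check that $v \vee w$ really is a join in the Tamari order — i.e. that the two flips are always compatible when both $v$ and $w$ cover $u$ — and that no third element lies strictly between $u$ and $v\vee w$. I would handle this by a short case analysis on the relative position of the two rotated nodes in the tree for $u$ (disjoint subtrees versus one nested inside the other, with a further split in the nested case), exactly as in the analysis underlying Lemma~\ref{2a} and Corollary~\ref{joins}; alternatively I would cite the associahedron's structure: every $2$-face of the associahedron is a square or a pentagon, and a $2$-face containing a given vertex $u$ as its bottom element with $u$ covered by two elements inside the face must be a square, since in a pentagon the bottom vertex is covered by only one other vertex of that face.

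Granting that, Conditions~(1),~(2),~(3) are immediate: (1) the two flipped diagonals $d_1 \ne d_2$ give distinct labels because the label records the location of the rotation and distinct rotations at $u$ are at distinct locations; (2) and (3) hold because $[u, v\vee w]$ is a rank-$2$ Boolean interval whose only two saturated chains are $u\prec v\prec v\vee w$ (labels: the $d_1$-flip, then a $d_2$-flip) and $u\prec w \prec v\vee w$ (labels: the $d_2$-flip, then a $d_1$-flip), so both label sequences use exactly the two labels $\lambda(u,v)$ and $\lambda(u,w)$ and no others. Hence $\lambda$ satisfies the index~2 formulation, so by Theorem~\ref{cover-enough} it is an $SB$-labeling and the Tamari lattice is an $SB$-lattice; Theorem~\ref{thm:sb} then recovers the Bj\"orner--Wachs conclusion that every interval is homotopy equivalent to a ball or a sphere.

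The main obstacle I anticipate is the bookkeeping in step two: confirming in full generality that two cover relations out of $u$ always flip to a common upper bound covering both, and that this upper bound has no other element below it, when the two rotated nodes are in a nested (parent/ancestor) configuration rather than in disjoint subtrees. This is where one must be careful that the label truly is intrinsic to $u$ (so that the ``same'' diagonal flip performed from $v$ versus from $w$ gets the same label, which is what makes Conditions~(2)--(3) read off cleanly), and it is essentially the content of checking that the relevant $2$-faces of the associahedron are combinatorial squares. Everything else is formal.
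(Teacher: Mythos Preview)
Your argument contains a genuine error: the claim that $[u,v\vee w]$ is always a rank-$2$ Boolean interval is false, and the supporting assertion that ``in a pentagon the bottom vertex is covered by only one other vertex of that face'' is simply wrong. Take $n=4$: the entire Tamari lattice is the pentagon, its minimum $u=(((1,2),3),4)$ is covered by both $v=((1,(2,3)),4)$ and $w=((1,2),(3,4))$, and $v\vee w=(1,(2,(3,4)))$ is the top. The interval $[u,v\vee w]$ has five elements and two saturated chains of different lengths (one of length~2, one of length~3), so it is not Boolean and not even graded. Exactly this phenomenon recurs inside every Tamari lattice whenever the two rotations at $u$ are nested in the configuration $((a,b)c)d$.

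This is precisely the ``nested'' case you flagged as the main obstacle, and it does not resolve the way you hoped. The paper handles it by choosing a specific labeling: label the cover $(a,b)c\mapsto a(b,c)$ by the rightmost letter of $b$. With this labeling the pentagon carries label sequences $(i,j,j)$ and $(j,i)$ on its two maximal chains, so both chains use exactly the label set $\{i,j\}$ and conditions~(2) and~(3) of the index-$2$ formulation hold despite the interval not being a square. Your proposed labeling by ``which diagonal is flipped'' does not obviously survive the pentagon: after flipping $d_1$, the diagonal $d_2$ may no longer be present in the new triangulation, so the second step on the long side is not a ``$d_2$-flip'' in any natural sense, and it is unclear what label it should receive. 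The fix is therefore not just bookkeeping; you need a labeling that is stable under the pentagon relation, and you need to analyze the pentagon case rather than rule it out.
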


\begin{proof} 
 For our proposed $SB$-labeling $\lambda $,  we record for each cover relation 
 $v\prec w$ the letter $\lambda(v,w) = k$ such 
 that an edge $e_{i,k}$ in $v$ is replaced by an edge 
 $e_{j,l}$ to obtain  $w$ for some $i<j<k<l$.  Lemma ~\ref{first-claim} proves that there is at most one cover
 relation upward from $v$ with this label $k$, thereby proving condition (i) in the index 2 formulation
 for an  $SB$-labeling.  Let us denote by 
 $u_k(v)$  the element $w$ with $v\prec w$ and $\lambda (v,w)=k$, 
  when such an element $w$ exists.
   
In order to prove conditions (ii) and (iii) in the index 2 formulation for an $SB$-labeling, it will suffice  to  confirm  the following facts regarding any element $v$ in the Tamari lattice and any pair of distinct elements $u_i(v)$ and $u_j(v)$ both covering $v$ with $i<j$.
\begin{enumerate}
\item
 We have  either $u_iu_j(v)=u_ju_i(v)$ or $u_iu_j(v)=u_ju_ju_i(v)$; moreover,  $u_i(v) \vee u_j (v) = u_iu_j(v)$ in either case.
 \item
 The only possible  elements in the interval $[v,u_i(v)\vee u_j(v)]$  
 are  $v, u_i(v), u_j(v), u_iu_j(v)$ and $u_ju_i(v)$.
  \end{enumerate}
  Both of these claims are proven in Lemma ~\ref{second-claim}.
  \end{proof}

\begin{lem}\label{first-claim}
Each element $v$ is covered by at most one element $w$ such that $\lambda (v,w) = k$ for any given label $k$.
\end{lem}

\begin{proof}
Given 
a triangulated  $(n+1)$-gon $v$, 
consider the edges emanating outward from some fixed  vertex $k$ for $2\le k\le n$, proceeding in counterclockwise order through this list of edges, and including in it the boundary edges 
$e_{k-1,k}$ and $e_{k,k+1}$ of the $(n+1)$-gon. That is, 
consider the maximal 
sequence  of edges having  the form $e_{j_r,k}, e_{j_{r-1},k},\dots ,e_{j_1,k}, e_{k,l_s},
e_{k,l_{s-1}},\dots ,e_{k,l_1}$ for $j_1<j_2<\cdots <j_r<k<l_1<l_2<\cdots <l_s$.  When there exists 
some $w$ such that $v\prec w$ with $\lambda (v,w)=k$, this implies $r\ge 2$ and $s\ge 1$.  
Notice that for a cover relation $v\prec w$ to have $\lambda (v,w) = k$, it must be replacing the
edge $e_{j_1,k}$ in $v$  by an edge $e_{j_2,l_s}$ to obtain $w$.  
 In particular, this means there is at most one cover relation $v\prec w$  colored $k$.  
\end{proof}

\begin{lem}\label{second-claim}
  Suppose $v$ is covered by distinct elements $u_i(v)$ and $u_j(v)$ for $i<j$.  If
  $u_iu_j(v) = u_ju_i(v)$, then the interval 
  $[v,u_i(v)\vee u_j(v)]$ has exactly 4 elements.
   If $u_iu_j(v)\ne u_ju_i(v)$,
  then $u_iu_j(v) = u_j u_j u_i(v)$, and  the interval $[v,u_i(v)\vee u_j(v)]$  has exactly 
  5 elements, namely the elements  
  $v$, $u_i(v)$, $u_j(v)$, $u_iu_j(v)$ and $u_ju_i(v)$. In either case, 
  $u_i(v)\vee u_j(v) = u_iu_j(v)$.
  \end{lem}

  \begin{proof}
   In the case where $v\prec u_i(v)$ replaces an edge across one quadrilateral of $v$ while   $v\prec u_j(v)$ replaces an edge across  another  
  quadrilateral of $v$ whose interior is 
  completely disjoint from the interior  of the first quadrilateral, these two edge replacement 
  operations commute, yielding $u_iu_j(v)=u_ju_i(v)$; by construction,  the interval from $v$ to $u_iu_j(v)$ will  then have exactly 4  elements.  To see that 
  $u_iu_j(v) = u_i(v) \vee u_j(v)$ in this case, 
  we use that the Tamari lattice is a lattice and 
  that there is not room for a strictly lower upper bound for $u_i(v)$ and $u_j(v)$  by virtue of the
 definition of cover relation.  
  
  Suppose on the other hand that  the pair of quadrilaterals 
  that are triangulated by   
  the two  edges to be flipped from their positions in $v$  by the two cover 
  relations $v\prec u_i(v)$ and $v\prec u_j(v)$ have interiors that 
   are not disjoint.  Then these triangulated 
  quadrilaterals must  share a triangle.  This forces the union of the two triangulated  quadrilaterals to comprise a triangulated pentagon appearing  within  the triangulation $v$. 
   In order to have cover relations  $v\prec u_i(v)$ and $v\prec u_j(v)$ both proceeding upward from 
   $v$, one may check directly that  the triangulated pentagon within $v$ 
   must have 5 vertices labeled $a,b,i,j,k$ for $a<b<i<j<k$ with edges
  $e_{a,i}$ and $e_{a,j}$ that can be flipped by applying the operators  $u_i$ and $u_j$, 
  respectively,   to give the new edges $e_{b,j} $ 
  and $e_{i,k}$, respectively.  
  One  may likewise check directly that $[v,u_iu_j(v)]$ has exactly the desired 5 elements.  By 
  virtue of the incomparability
   of $u_i(v)$ and $u_j(v)$ together with the definition of cover relation, there cannot be 
  any $z$ satisfying the three conditions $u_i(v)\le z$ and $u_j(v)\le z$ and $z < u_iu_j(v)$.  
  Therefore, $u_iu_j(v)$ is a least upper bound for $u_i(v)$ and $u_j(v)$.  
  Since the Tamari lattice is well known to be a 
  lattice, this must be the unique least upper bound $u_i(v)\vee u_j(v)$.
   \end{proof}

\begin{ex}\label{dominance-example}
Neither the dominance order on the partitions of an integer $n$ nor its dual poset admits an $SB$-labeling in general.  This can be seen by considering  the interval downward from the partition $(5,4,3,2,1)$ to the meet of the 4 elements that are covered by  $(5,4,3,2,1)$, namely the interval shown in Figure \ref{fig:dominance}; we leave it as an exercise for the reader to check that neither the poset shown in Figure \ref{fig:dominance} nor its dual poset admits an $SB$-labeling, implying the same for dominance order for $n=15$.
Note that the dominance order is an  $I$-lattice, a notion introduced by Greene in \cite{Gr} for proving that certain lattices take only M\"obius function values $0, 1, -1$. This example  shows that the notions of $I$-lattices and $SB$-lattices are distinct.

 Recall that the dominance order was proven to be non-pure shellable with  each open interval homotopy equivalent to a  ball or a sphere  in \cite{BW}.  The M\"obius function was  determined prior to the development of the notion of non-pure shellability in \cite{Bogart}, \cite{Brylawski} and \cite{Gr}.  
\end{ex}

\begin{figure}
\centering
\includegraphics[width=8cm]{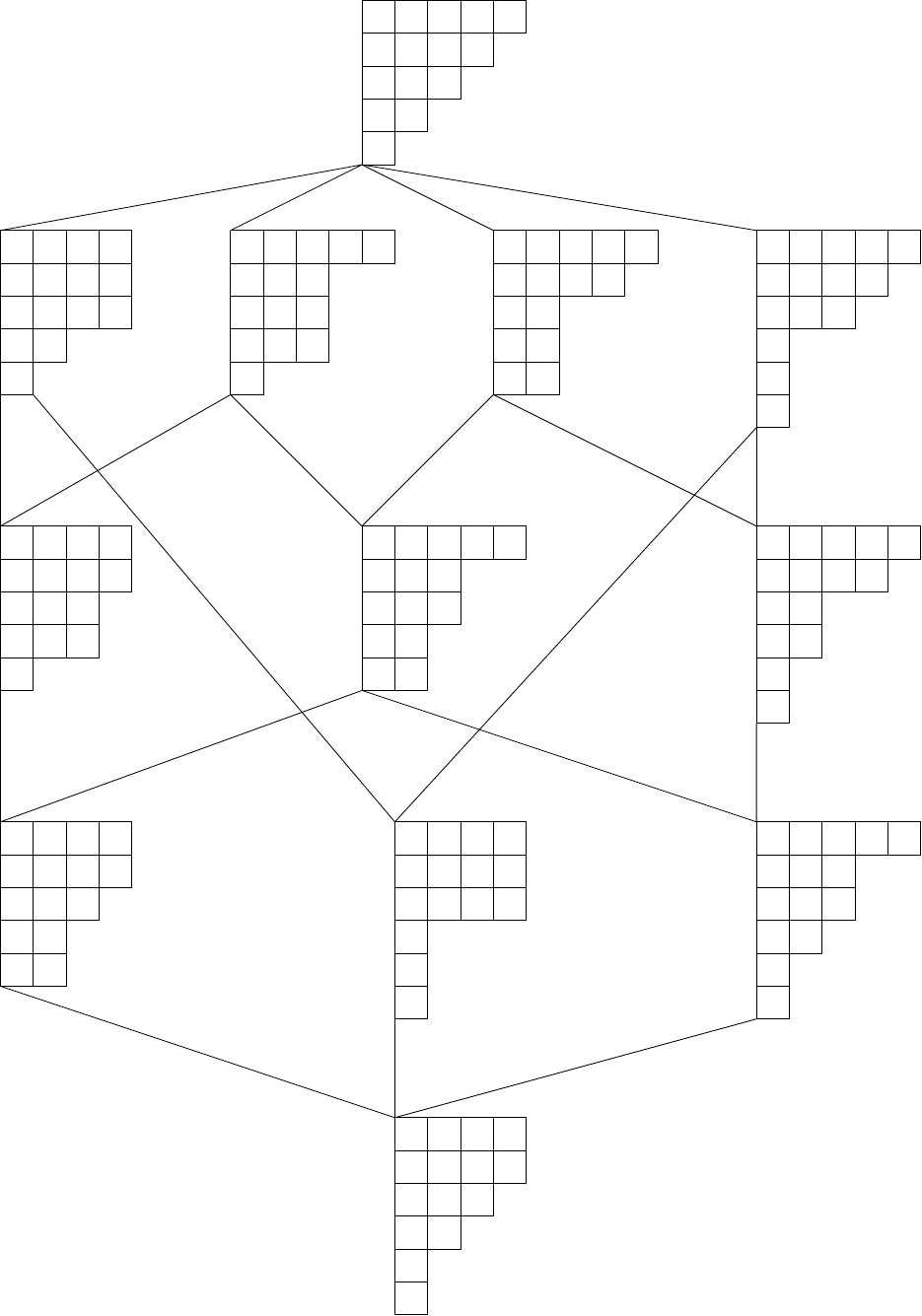}
\caption{The interval between partitions $(4,4,3,2,1,1)$ and $(5,4,3,2,1)$ in the dominance order. This interval, as well as  its dual, has no $SB$-labeling.} 
\label{fig:dominance}
\end{figure}

\section*{Acknowledgments}

The authors  are  grateful to Georgia Benkart, Stephanie van Willigenburg,  Monica Vazirani, and  the Banff International Research Station (BIRS) for conducting an inspiring  workshop entitled
Algebraic Combinatorixx  in May 2011 for female researchers in algebraic combinatorics with the goal of helping  establish new and fruitful collaborations.   This project grew out of discussions that began at that workshop.
 The authors also thank  Louis Billera, Anders Bj\"orner, Curtis Greene,  Thomas McConville,
  Peter McNamara,   Vic Reiner,  Hugh Thomas, and the very helpful anonymous referees
 for helpful discussions, references, and suggestions.

\end{document}